\newtheorem{theorem}{Theorem}[section]
\newtheorem{lemma}{Lemma}[section]
\theoremstyle{definition}
\numberwithin{equation}{section}
\title{Word-Representability of Split Graphs with Independent Set of Size 4}
\author{\hspace{1cm} Suchanda Roy \ and Ramesh Hariharasubramanian \\ 
{{\footnotesize r.suchanda@iitg.ac.in},\ {\footnotesize  ramesh\_h@iitg.ac.in}}\\{\footnotesize Department of Mathematics, Indian Institute of Technology Guwahati, Guwahati, Assam 781039, India}}
\begin{document}
	\maketitle
	
	\begin{abstract}
 A pair of letters \( x \) and \( y \) are said to \textit{alternate} in a word \( w \) if, after removing all letters except for the copies of \(x\) and \(y\) from \( w \), the resulting word is of the form \( xyxy\ldots \) (of even or odd length) or \( yxyx\ldots \) (of even or odd length). A graph \( G = (V(G), E(G)) \) is \textit{word-representable} if there exists a word \( w \) over the alphabet \( V(G) \), such that any two distinct vertices \( x, y \in V(G)\) are adjacent in \( G \) (i.e., \( xy \in E(G) \)) if and only if the letters \( x \) and \( y \) alternate in \( w \). 

A split graph is a graph in which the vertices can be partitioned into a clique and an independent set. Word-representability of split graphs has been studied in a series of papers \cite{chen2022representing, iamthong2022word,kitaev2021word,kitaev2024semi} in the literature. In this work, we give a minimal forbidden induced subgraph characterization of word-representable split graphs with an independent set of size 4, which is an open problem posed by Kitaev and Pyatkin in \cite{kitaev2024semi} \\
    \textbf{Keywords:} semi-transitive orientation, split graph, minimal forbidden subgraph, word-representable graph.
	\end{abstract}

	\maketitle
	\pagestyle{myheadings}
	
\section{Introduction}
 The theory of word-representable graphs, first introduced by Sergey Kitaev and Steven Seif in the setting of Perkins semigroups~\cite{kitaev2008word}, has become a rich area of research with strong connections to algebra, graph theory, and combinatorics on words. This graph class, which generalizes important classes such as circle graphs, comparability graphs, and 3-colorable graphs, has been extensively studied in monographs~\cite{kitaev2017comprehensive, kitaev2015words}. Computational tools like Glen's software have further supported their analysis and construction. Notably, recognizing whether a graph is word-representable or not is a NP-complete problem.

An orientation of a graph is \emph{transitive} if the presence of \( u \rightarrow v \) and \( v \rightarrow z \) implies \( u \rightarrow z \). Graphs that admit such an orientation are called comparability graphs.

A orientation of a graph is said to be \textit{semi-transitive} if it is acyclic and satisfies the following condition: for any directed path \( u_1 \rightarrow u_2 \rightarrow \cdots \rightarrow u_t \) with \( t \geq 2 \), either there is no edge from \( u_1 \) to \( u_t \), or the subgraph induced by the vertices \( u_1, \ldots, u_t \) forms a transitive tournament—that is, all possible edges \( u_i \rightarrow u_j \) exist for every \( i < j \). An undirected graph is said to be semi-transitive if it can be oriented in such a way that the resulting digraph is semi-transitive. This notion of semi-transitive orientation was introduced in~\cite{HALLDORSSON2016164} as a tool for characterizing \textit{word-representable graphs} (Theorem \ref{ch}).

A graph \( G = (V(G), E(G)) \) is said to be a \textit{split graph} if its vertex set \( V(G) \) can be partitioned into two disjoint subsets, where one subset induces a clique  and the other one induces an independent set~\cite{golumbic2004algorithmic}. Throughout this work, we assume that the clique is of maximum possible size, i.e., no vertex in the independent set is adjacent to all vertices in the clique.

Split graphs have received considerable attention in the literature, largely due to their rich structural properties and diverse applications (see, e.g.,~\cite{golumbic2004algorithmic} and references therein). A forbidden induced subgraph characterization for the class of split comparability graphs can be found in~\cite{golumbic2004algorithmic}. In the context of word-representability, it is known that while some split graphs admit semi-transitive orientations, others do not. The study of semi-transitive orientability, along with efforts to characterize split graphs via minimal forbidden subgraphs, has been the focus of several recent works~\cite{chen2022representing, iamthong2022word,kitaev2021word,kitaev2024semi} over the past 5--6 years. Although partial progress has been made in describing split graphs through forbidden subgraph characterizations, a complete classification remains open. In this work, we are studying a specific subclass of split graph, in particular those with an independent set of size 4.

In the following, Section \ref{p} presents existing results on the word-representability of split graphs and Section~\ref{s} presents our result on the characterization of word-representable split graphs with an independent set of size 4, in terms of minimal forbidden induced subgraphs, extending the existing body of work.
 \section{Preliminaries}\label{p}
One of the key developments in the study of word-representable graphs is their characterization in terms of semi-transitive orientations, which forms the basis for many subsequent results. We begin this section by stating this fundamental theorem. For a comprehensive account of the theory and related results, see~\cite{golumbic2004algorithmic,chen2022representing, iamthong2022word,kitaev2021word,kitaev2024semi}.

\begin{theorem}\cite{HALLDORSSON2016164}\label{ch} A graph is word-representable if and only if it admits a semi-transitive orientation.
\end{theorem}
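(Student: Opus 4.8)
The plan is to prove the two implications separately, after first recording the standard reformulation of semi-transitivity: an acyclic orientation is semi-transitive if and only if it contains no \emph{shortcut}, i.e., no directed path $u_1 \to u_2 \to \cdots \to u_t$ together with the edge $u_1 \to u_t$ for which some chord $u_i \to u_j$ with $1 \le i < j \le t$ is absent. I would also invoke the known reduction that a word-representable graph can always be represented by a \emph{uniform} word, in which every letter occurs the same number $k$ of times; this lets me argue entirely with uniform words.

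For the direction ``word-representable $\Rightarrow$ semi-transitive,'' I would start from a uniform word $w$ representing $G$ and orient each edge $xy$ by setting $x \to y$ whenever the first occurrence of $x$ precedes the first occurrence of $y$ in $w$. Acyclicity is immediate, since the orientation is induced by the total order of first occurrences. To show there is no shortcut, I would take a directed path $u_1 \to \cdots \to u_t$ with the edge $u_1 \to u_t$ present and analyze the interleaving of occurrences along the path: because each consecutive pair $u_iu_{i+1}$ alternates and $u_1u_t$ alternates, a missing chord $u_iu_j$ would force the copies of $u_i$ and $u_j$ to appear in a non-alternating pattern that is incompatible with the alternation of $u_1$ and $u_t$. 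Carrying out this case analysis, conveniently on a minimal shortcut, yields the contradiction, so every such path spans a transitive tournament.

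For the converse, ``semi-transitive $\Rightarrow$ word-representable,'' I would argue by induction on $|V(G)|$, constructing a representing word. Since a semi-transitive orientation is acyclic it has a source $s$; by the induction hypothesis the induced orientation on $G - s$ is represented by some uniform word $w'$. The heart of the proof is to insert the appropriate number of copies of $s$ into $w'$, taking a suitable power of $w'$ first if necessary (which preserves the represented graph), so that $s$ alternates with every out-neighbour in $N^+(s)$ and with no non-neighbour. Here I would use the no-shortcut property crucially: it guarantees that the out-neighbourhood of $s$ is arranged consistently enough in $w'$ that a single common family of insertion positions can make $s$ alternate with all of $N^+(s)$ simultaneously while remaining non-alternating with the rest.

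The main obstacle is exactly this last construction. Verifying that the semi-transitivity of the orientation rules out the obstructions to a valid insertion---that is, that no two out-neighbours of $s$ force conflicting placements and that no non-neighbour is accidentally made to alternate---requires the delicate positional bookkeeping that the shortcut-free condition is designed to control; the forward direction, by contrast, is largely a matter of reading off an orientation and checking a combinatorial incompatibility.
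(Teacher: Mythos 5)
The paper does not prove this theorem; it cites it from Halld\'orsson--Kitaev--Pyatkin, so your attempt can only be measured against the known proof from the literature. Your forward direction (orient each edge from the letter whose first occurrence in a uniform representing word comes earlier, then rule out shortcuts by tracking $i$-th occurrences along a directed path) is the standard argument and is fine as a sketch, although the claimed ``combinatorial incompatibility'' is asserted rather than carried out.

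The backward direction, however, has a genuine gap, and it sits exactly where you locate ``the main obstacle.'' Your induction removes a source $s$, obtains from the induction hypothesis \emph{some} uniform word $w'$ representing $G-s$, and then claims that copies of $s$ can be inserted into a power of $w'$. This fails: the induction hypothesis gives you no control over $w'$, and an adversarial $w'$ can block every insertion. Concretely, let $G=K_{1,3}$ with centre $s$ and all edges oriented away from $s$ (a semi-transitive orientation with $s$ a source), so $G-s$ is the independent set $\{u,v,t\}$, which is represented by the $2$-uniform word $w'=uvvttu$. In any power of $w'$, the interval between the first two occurrences of $u$ contains a full gap between two consecutive $v$'s and a disjoint full gap between two consecutive $t$'s; alternation of $s$ with $v$ and with $t$ would force at least two copies of $s$ into that interval, while alternation of $s$ with $u$ forces exactly one. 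Hence no insertion into any power of $w'$ represents $K_{1,3}$, even though $K_{1,3}$ is word-representable. To repair this you would have to strengthen the inductive statement so that the word produced for $G-s$ carries structure tied to the orientation; that is in effect what the actual proof does, building the word globally from the semi-transitive orientation (processing the vertices outside a maximum clique in topological order and allotting two blocks to each), which yields a $2(n-\kappa(G))$-uniform representation rather than inserting a vertex into an arbitrary representation of a subgraph.
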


The notion of semi-transitive orientation extends the classical concept of transitive orientation.

\begin{theorem}\cite{HALLDORSSON2016164}\label{co}
Let \( n \) be the number of vertices in a graph \( G \), and let \( x \in V(G) \) be a vertex of degree \( n - 1 \) (called an \emph{all-adjacent} vertex). Let \( H = G \setminus \{x\} \) be the graph obtained by removing \( x \) and all edges incident to it. Then, \( G \) is word-representable if and only if \( H \) is a comparability graph.
\end{theorem}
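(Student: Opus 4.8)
The plan is to invoke Theorem~\ref{ch} to restate the claim purely in terms of orientations: it suffices to show that $G$ admits a semi-transitive orientation if and only if $H = G \setminus \{x\}$ is a comparability graph. For the ``if'' direction, I would start from a transitive orientation of $H$ and extend it to $G$ by making $x$ a source, orienting $x \to v$ for every $v \neq x$ (legal since $x$ is all-adjacent). Acyclicity is immediate because $x$ has no in-edges. For semi-transitivity, note that any directed path with adjacent endpoints either lies entirely inside $H$, where a transitive orientation already turns every directed path into a transitive tournament, or it begins at the source $x$; in the latter case its tail lies in $H$ and is a transitive tournament, and $x$ dominates every tail vertex, so the whole vertex set spans a transitive tournament. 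Hence $G$ is semi-transitive.

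The ``only if'' direction is the main obstacle. Given a semi-transitive orientation $O$ of $G$, the naive move is to restrict $O$ to $H$, but this restriction need not be transitive: for instance, when $H$ is the path on vertices $a, b, c$, the restriction can read $a \to b \to c$, which is not transitive even though this path is a comparability graph. So a genuinely new orientation of $H$ must be constructed. My plan is to split $V(H)$ into the set $A$ of in-neighbors and the set $B$ of out-neighbors of $x$. Using acyclicity and semi-transitivity of $O$, I would first establish three structural facts: every edge between $A$ and $B$ is oriented from $A$ to $B$ (otherwise $a \to x \to b \to a$ is a directed triangle); and each of the restrictions $O|_A$ and $O|_B$ is transitive (otherwise a directed path inside $A$, extended by the edge to $x$, or a directed path inside $B$, prefixed by the edge from $x$, would be a shortcut violating semi-transitivity).

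The heart of the argument will be two monotonicity conditions forced by semi-transitivity. First, if $q \to r$ inside $A$ and $q$ is adjacent to some $p \in B$, then $r$ is adjacent to $p$ as well; otherwise $q \to r \to x \to p$ has adjacent endpoints $q,p$ yet $r,p$ are non-adjacent, a shortcut. Dually, if $b_1 \to b_2$ inside $B$ and $a \in A$ is adjacent to $b_2$, then $a$ is adjacent to $b_1$; otherwise $a \to x \to b_1 \to b_2$ is a shortcut. With these in hand I would define a new orientation $O'$ of $H$ that keeps $O|_A$ and $O|_B$ but reverses every cross edge so that all of them point from $B$ to $A$. Checking that $O'$ is transitive reduces to inspecting the middle vertex of an arbitrary directed $2$-path $p \to q \to r$: the cases internal to $A$ or to $B$ follow from transitivity of $O|_A$ and $O|_B$, the case ``cross edge then $A$-edge'' is resolved by the first monotonicity condition, and the case ``$B$-edge then cross edge'' by the second, while the remaining combinations are vacuous since $O'$ has no edge directed from $A$ to $B$. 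This produces a transitive orientation of $H$, so $H$ is a comparability graph.

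The difficulty I anticipate is precisely recognizing that the restriction fails to be transitive and then isolating the two monotonicity conditions that make the reversed-cross-edge orientation work; once the relevant four-vertex shortcut arguments are identified, both the derivation of the conditions and the final transitivity check are routine case analyses.
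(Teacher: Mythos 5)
The paper does not prove this statement; it is quoted from \cite{HALLDORSSON2016164}, so there is no in-paper argument to compare against. Your proof is correct and complete as a sketch, but it takes a different route from the one in the cited literature. The classical argument (going back to Kitaev--Pyatkin and Kitaev--Seif) works directly with words: one shows that a graph with an all-adjacent vertex $x$ is word-representable if and only if $H=G\setminus\{x\}$ is \emph{permutationally representable} (between two consecutive occurrences of $x$ in a representing word, every other letter occurs exactly once, so the word decomposes into permutations of $V(H)$), and then invokes the fact that permutationally representable graphs are exactly the comparability graphs. Your argument instead routes everything through Theorem~\ref{ch} and semi-transitive orientations, which is legitimate and arguably more in the spirit of this paper. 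Your forward direction (extend a transitive orientation of $H$ by making $x$ a source) is standard and sound. Your converse is the genuinely nontrivial part, and you handle it correctly: you rightly observe that merely restricting a semi-transitive orientation of $G$ to $H$ does not yield a transitive orientation, you derive the two adjacency (``monotonicity'') conditions from shortcuts of the form $q\to r\to x\to p$ and $a\to x\to b_1\to b_2$, and the reversed-cross-edge orientation (all edges from the out-neighbourhood $B$ of $x$ to the in-neighbourhood $A$) together with the transitivity of $O|_A$ and $O|_B$ does give a transitive orientation of $H$ after the four-case check you describe. The only thing I would ask you to make explicit in a final write-up is that $A\cup B=V(H)$ (immediate since $x$ is all-adjacent) and that the excluded case of a directed $2$-path leaving $A$ for $B$ in the new orientation is vacuous, which you do note. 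What the word-based proof buys is independence from Theorem~\ref{ch}; what yours buys is a self-contained orientation-theoretic argument that needs no machinery about permutational representability.
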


Let \( S_n = (E_{n-m}, K_m) \) be a split graph on \( n \) vertices, where the vertex set is partitioned into a maximal clique \( K_m \) on \( m \) vertices and an independent set \( E_{n-m} \) on \( n - m \) vertices. Each vertex in \( E_{n-m} \) has degree at most \( m - 1 \). In general, \(E\) denotes an independent set and \(K\) denotes a clique. We follow this notation throughout this work.

\begin{theorem} \cite{golumbic2004algorithmic}
Let \( S_n = (E_{n-m}, K_m) \) be a split graph. Then it is a comparability graph if and only if it does not contain an induced subgraph isomorphic to \( B_1 \), \( B_2 \), or \( B_3 \) (as shown in Figure \ref{fig:B123}).
\end{theorem}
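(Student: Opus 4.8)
The plan is to reduce the existence of a transitive orientation of $S_n=(E_{n-m},K_m)$ to a concrete combinatorial condition on the clique, and then read off the obstructions. Since the restriction of a transitive orientation to $K_m$ is again transitive, it is a transitive tournament and hence induces a linear order $v_1\to v_2\to\cdots\to v_m$ on the clique; I would fix this order and analyse the edges joining an independent vertex to the clique. Two elementary forcing rules do the work. First, for a single independent vertex $e$: from $e\to v_i$ with $i<j$ the directed path $e\to v_i\to v_j$ forces the edge $ev_j$ to exist and be oriented $e\to v_j$, and dually $v_i\to e$ with $h<i$ forces $v_h\to e$; hence the out-neighbours of $e$ in the clique form a suffix and the in-neighbours a prefix of the order. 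Second, for two (necessarily non-adjacent) independent vertices $e,f$ with a common clique neighbour $v$, one cannot have $e\to v\to f$, as transitivity would require the absent edge $ef$; so every clique vertex orients all of its independent edges the same way, and a short argument shows every such ``source'' vertex must precede every ``sink'' vertex. This produces a split point partitioning $K_m$ into a low block $L$ (a prefix of the order) and a high block $H$ (a suffix).

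Collecting these facts, I would establish the key reduction: $S_n$ is a comparability graph if and only if $K_m$ can be linearly ordered and cut into $L$ followed by $H$ so that, for every independent vertex $e$, the trace $N(e)\cap L$ is a prefix of $L$ and $N(e)\cap H$ is a suffix of $H$. Because a family of sets can be realised simultaneously as prefixes of one linear order exactly when it is nested under inclusion, this is equivalent to a purely set-theoretic statement: the clique vertices admit a $2$-colouring into $L$ and $H$ for which the traces $\{N(e)\cap L\}$ form a chain and the traces $\{N(e)\cap H\}$ form a chain. The converse implication --- that any such order and split point yields an acyclic transitive orientation --- I would verify by checking the three kinds of directed $2$-paths that can occur.

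With the reduction in place the theorem becomes a question of when this $2$-colouring exists. For necessity I would show that none of $B_1,B_2,B_3$ admits a valid split: each independent ``crossing'' pair, witnessed by $a\in N(e)\setminus N(f)$ and $b\in N(f)\setminus N(e)$, forces $a$ and $b$ into different blocks, and in each $B_i$ these constraints force more than two mutually distinct classes, which two blocks cannot accommodate. As comparability graphs are closed under induced subgraphs, any split graph containing an induced $B_i$ therefore fails to be a comparability graph. For sufficiency I would prove the contrapositive by localising the obstruction: I would record, for every crossing pair, the constraint that a specific pair of clique vertices receive opposite colours, model the totality of these constraints as a $2$-colouring (bipartiteness-type) consistency problem on $K_m$, and show that every minimal inconsistent instance pulls back to an induced copy of $B_1$, $B_2$, or $B_3$.

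I expect sufficiency to be the main obstacle. The heart of the matter is a case analysis of how several independent vertices can mutually cross over a shared set of clique vertices --- the prototype being a clique triangle $\{v_1,v_2,v_3\}$ together with independent vertices of neighbourhoods $\{v_2,v_3\}$, $\{v_1,v_3\}$, $\{v_1,v_2\}$, which forces $v_1,v_2,v_3$ pairwise into distinct blocks --- and a proof that trimming any inconsistent configuration to a minimal one leaves precisely a graph from Figure~\ref{fig:B123}. The delicate point is to rule out unbounded obstructions: one must confirm that the two-block structure (as opposed to a single consecutive-ones/interval structure, which would admit an infinite family of obstructions) collapses the possible minimal crossing patterns down to exactly the three finite graphs $B_1$, $B_2$, and $B_3$.
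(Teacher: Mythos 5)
You should first be aware that the paper contains no proof of this statement: it is imported verbatim from Golumbic's monograph \cite{golumbic2004algorithmic} and used as a black box (its only role is in the last case of the main theorem, where a universal vertex reduces word-representability to comparability of the remainder). So there is nothing in the paper to match your argument against, and your proposal must stand on its own. The reduction you set up is correct and is the standard one: a transitive orientation linearly orders the clique; each independent vertex has in-neighbourhood a prefix and out-neighbourhood a suffix; a common clique neighbour of two independent vertices cannot be the middle of a directed $2$-path between them, so each clique vertex treats all of $E_{n-m}$ uniformly and the clique splits into a low block $L$ followed by a high block $H$; and realisability of all traces as prefixes of $L$ and suffixes of $H$ is equivalent to a proper $2$-colouring of the auxiliary graph on $K_m$ whose edges are the witness pairs $a\in N(e)\setminus N(f)$, $b\in N(f)\setminus N(e)$. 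Your necessity direction is also sound: in each of $B_1$, $B_2$, $B_3$ these constraints form a triangle on the three clique vertices, so no $2$-colouring exists, and comparability is hereditary.

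The genuine gap is exactly where you yourself locate it: the claim that every odd cycle in the constraint graph pulls back to an induced $B_1$, $B_2$ or $B_3$ is the entire content of the theorem, and you only announce a case analysis rather than carry one out. Nothing in the proposal shows that a shortest odd cycle must have length $3$ (or that longer odd cycles can be shortcut by additional witness edges), nor that a constraint triangle on clique vertices $v_1,v_2,v_3$ --- which in general may be witnessed by up to six distinct independent vertices, one pair per edge --- can always be trimmed to one of the three specific patterns of Figure~\ref{fig:B123}. Since the surprising part of the statement is precisely that the obstruction list is finite and consists of these three graphs, the proof is incomplete until that classification is done; as written, you have only verified that $B_1$, $B_2$, $B_3$ are obstructions, not that they are the only minimal ones.
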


\begin{figure}
\centering
\begin{tabular}{@{\hspace{0cm}}c@{\hspace{2cm}}c@{\hspace{2cm}}c@{\hspace{0cm}}}

\begin{tikzpicture}[scale=.8, every node/.style={circle, fill=black, inner sep=2pt}]
  \node (a) at (0,0) {};
  \node (b) at (1,0) {};
  \node (c) at (0,1) {};
  \node (d) at (1,1) {};
  \node (e) at (0.5,2) {};
  \node (f) at (0.5,3) {};
  \draw (a) -- (c);
  \draw (b) -- (d);
  \draw (d) -- (c);
  \draw (c) -- (e);
  \draw (e) -- (d);
  \draw (e) -- (f);
  \node[draw=none, fill=none] at (0.5,-0.5) {\textit{$B_1$}};
\end{tikzpicture}

&

\begin{tikzpicture}[scale=.8, every node/.style={circle, fill=black, inner sep=2pt}]
  \node (a) at (0,0) {};
  \node (b) at (1,0) {};
  \node (c) at (2,0) {};
  \node (d) at (0.5,1) {};
  \node (e) at (1.5,1) {};
  \node (f) at (1,2) {};
  \draw (a) -- (b) -- (c);
  \draw (a) -- (d) -- (f) -- (e) -- (c);
  \draw (d) -- (e);
  \draw (d) -- (b);
  \draw (b) -- (e);
  \node[draw=none, fill=none] at (1,-0.5) {\textit{$B_2$}};
\end{tikzpicture}

&

\begin{tikzpicture}[scale=.8, every node/.style={circle, fill=black, inner sep=2pt}]
  \node (a) at (0,0) {};
  \node (b) at (1,0) {};
  \node (c) at (0,1) {};
  \node (d) at (1,1) {};
  \node (e) at (-0.5,2) {};
  \node (f) at (0.5,2) {};
  \node (g) at (1.5,2) {};
  \draw (e) -- (f) -- (g);
  \draw (e) -- (c) -- (f);
  \draw (f) -- (d) -- (g);
  \draw (d) -- (c);
  \draw (d) -- (b);
  \draw (a) -- (c);
  \node[draw=none, fill=none] at (0.5,-0.5) {\textit{$B_3$}};
\end{tikzpicture}

\end{tabular}

 \vspace{0.2em}

\caption{Illustration of the graphs \( B_1 \), \( B_2 \) and \( B_3 \).}
\label{fig:B123}
\end{figure}
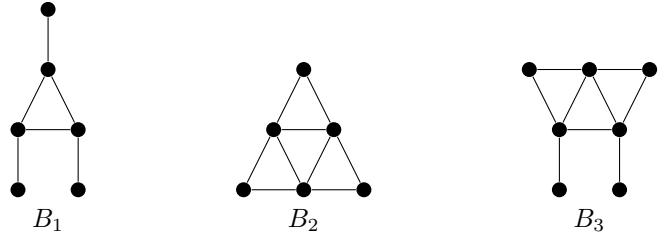
The following theorem highlights a notable property of transitive orientation when applied to cliques.

\begin{theorem}
\cite{kitaev2021word} Let \( K_m \) be a clique on \(m\) vertices in a graph \( G \). Then, any acyclic orientation of \( G \) induces a transitive orientation on \( K_m \). In particular, every semi-transitive orientation of \( G \) yields a transitive orientation on \( K_m \). Moreover, in both cases, the induced orientation on \( K_m \) contains exactly one source and one sink (as shown in Figure \ref{km}).
\end{theorem}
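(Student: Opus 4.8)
The plan is to prove three separate assertions about an acyclic orientation of a graph $G$ containing a clique $K_m$: first, that the restriction to $K_m$ is itself transitive; second, that every semi-transitive orientation is in particular acyclic (so the clique statement applies to it); and third, that any transitive orientation of a complete graph has a unique source and a unique sink. I would begin by fixing an arbitrary acyclic orientation of $G$ and restricting attention to the induced orientation on the vertex set of $K_m$. Since every pair of vertices in $K_m$ is adjacent, the restriction is a \emph{tournament}, and the key observation is that an acyclic tournament must be transitive.

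To make the first step precise, I would argue that an acyclic tournament on the vertices of $K_m$ induces a total (linear) order: define $u \prec v$ whenever the arc is oriented $u \rightarrow v$. Acyclicity guarantees there are no directed cycles, and in a tournament this forces the relation $\prec$ to be a strict total order, since any three vertices $u, v, z$ with $u \rightarrow v$ and $v \rightarrow z$ cannot have $z \rightarrow u$ (that would be a directed triangle, contradicting acyclicity), so necessarily $u \rightarrow z$. This is exactly transitivity, establishing the first claim. The statement for semi-transitive orientations then follows immediately, because by definition (given in the excerpt) a semi-transitive orientation is acyclic, so the same argument applies verbatim.

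For the source/sink claim, I would use the total order $\prec$ just constructed. A finite strict total order on the $m$ vertices has a unique minimum element and a unique maximum element; the minimum is a vertex with no incoming arc within $K_m$ (a source) and the maximum is a vertex with no outgoing arc within $K_m$ (a sink). Uniqueness is immediate from the fact that in a total order the extremal elements are unique. I would phrase this as: the source is the vertex $v$ for which $v \rightarrow u$ for all other $u \in V(K_m)$, and the sink is the vertex $w$ for which $u \rightarrow w$ for all other $u$, and both exist and are unique because $\prec$ linearly orders the vertices.

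The main conceptual content is the passage from acyclicity to transitivity via the ``no directed triangle'' argument on triples, but in truth this is not an obstacle so much as a short clean observation; the proof is essentially routine once one recognizes that an acyclic tournament is a linear order. The only point requiring care is making sure the argument about three vertices genuinely covers all cases of transitivity rather than just triangles, but since in a clique every relevant sub-configuration $u_i \to u_j \to u_k$ lives inside a triangle $\{u_i,u_j,u_k\}$ whose third edge must be present, the triangle case suffices to establish full transitivity. I would therefore expect the write-up to be brief, with the bulk of the wording devoted to stating the total-order structure cleanly and deducing the uniqueness of source and sink from it.
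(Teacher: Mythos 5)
Your proposal is correct and is essentially the intended argument: the paper states this result without proof (it is quoted from Kitaev and Pyatkin), and the standard justification is exactly what you give --- the restriction of an acyclic orientation to a clique is an acyclic tournament, the ``no directed triangle'' observation upgrades acyclicity to transitivity since every pair in the clique is adjacent, semi-transitivity includes acyclicity by definition, and the resulting linear order has a unique minimum (source) and maximum (sink). No gaps.
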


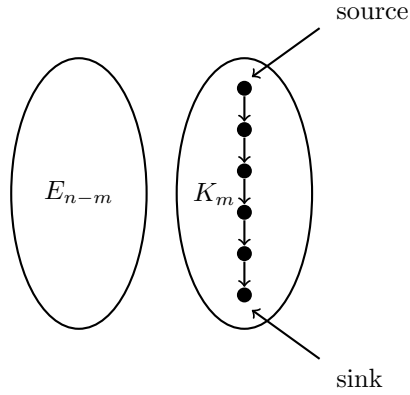
\begin{figure}[h]
\centering
\begin{tikzpicture}[scale=1]

    \def\ovalx{3.2}

    \draw[thick] (1,0) ellipse (0.9cm and 1.8cm);
    \node at (1,0) {\textit{$E_{n-m}$}};
    
    \draw[thick] (\ovalx,0) ellipse (0.9cm and 1.8cm);
    \node at (\ovalx - 0.4,0) {\textit{$K_m$}};

    \foreach \i in {0,...,5} {
        \node[circle, fill=black, inner sep=2pt] (v\i) at (\ovalx,1.4-\i*0.55) {};
    }

    \foreach \i in {0,...,4} {
        \pgfmathtruncatemacro{\j}{\i+1}
        \draw[->, thick] (v\i) -- (v\j);
    }

    \draw[->, thick] (\ovalx + 1, 2.2) -- (\ovalx + 0.1, 1.55); 
    \draw[->, thick] (\ovalx + 1, -2.2) -- (\ovalx + 0.1, -1.55); 

    \node at (\ovalx + 1.1, 2.2) [anchor=south west] {source};
    \node at (\ovalx + 1.1, -2.2) [anchor=north west] {sink};

\end{tikzpicture}
\caption{A schematic structure of a split graph with vertex partition into a clique $K_m$ and an independent set $E_{n-m}$. Arrows indicate directions and hierarchy among the vertices in $K_m$.}
\label{km}
\end{figure}

The following two theorems describe the structure of semi-transitive orientations in word-representable split graphs.

\begin{theorem}\label{th2}
\cite{kitaev2021word} Let \( S_n = (E_{n-m}, K_m) \) be a split graph, and let \( P = p_1 \rightarrow p_2 \rightarrow \cdots \rightarrow p_m \) be the longest directed path in \( K_m \). Then, any semi-transitive orientation of \( S_n \) partitions the vertices in the independent set \( E_{n-m} \) into three (possibly empty) categories:
\begin{itemize}

    \item A vertex is of \emph{type A} if it is a \textit{source} and is adjacent to all vertices in the set \( \{p_i, p_{i+1}, \ldots, p_j\} \) for some \( 1 \leq i \leq j \leq m \).

    \item A vertex is of \emph{type B} if it is a \textit{sink} and is adjacent to all vertices in the set \( \{p_i, p_{i+1}, \ldots, p_j\} \) for some \( 1 \leq i \leq j \leq m \).

    \item A vertex is of \emph{type C} if it has an incoming edge from every vertex in the set \( I_v = \{p_1, p_2, \ldots, p_i\} \) and an outgoing edge to every vertex in the set \( O_v = \{p_j, p_{j+1}, \ldots, p_m\} \), for some \( 1 \leq i < j \leq m \).

\end{itemize}
\end{theorem}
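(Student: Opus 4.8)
The plan is to fix an arbitrary semi-transitive orientation of $S_n$ and analyze one vertex $v$ of the independent set $E_{n-m}$ at a time. The starting point is the preceding theorem on cliques: since the orientation is acyclic it restricts to a transitive orientation of $K_m$, so the longest path $P = p_1 \to \cdots \to p_m$ is in fact a total order, with the edge $p_a \to p_b$ present for every $a < b$. Because $v$ lies in an independent set, its only neighbors are among the $p_k$, and its incident edges split into in-edges $p_a \to v$ and out-edges $v \to p_b$. I would then distinguish three exhaustive cases according to whether $v$ has only out-edges (a source), only in-edges (a sink), or both. The whole argument rests on a single recurring device, which I will call a \emph{shortcut argument}: whenever I can exhibit a directed path $u_1 \to \cdots \to u_t$ together with the edge $u_1 \to u_t$, semi-transitivity forces the induced subgraph on $\{u_1,\dots,u_t\}$ to be a transitive tournament, so in particular every pair among these vertices is adjacent; exhibiting a non-adjacent pair inside such a configuration yields a contradiction.

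For the source case I would show that the neighborhood of $v$ is an interval of the order. Suppose $v \to p_a$ and $v \to p_c$ with $a < c$, but $v$ is not adjacent to some $p_b$ with $a < b < c$. Then $v \to p_a \to p_b \to p_c$ is a directed path and $v \to p_c$ is a chord, so the shortcut argument demands $v \sim p_b$, a contradiction. Hence the out-neighbors of $v$ are exactly $\{p_i,\dots,p_j\}$ with $i,j$ the least and greatest neighbor indices, placing $v$ in type A. The sink case is symmetric: from $p_a \to v$, $p_c \to v$ with a missing $p_b$ in between, one builds the path $p_a \to p_b \to p_c \to v$ with chord $p_a \to v$ and derives the same contradiction, yielding type B.

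The mixed case, where $v$ has both an in-edge $p_a \to v$ and an out-edge $v \to p_b$, is the step I expect to be the main obstacle, since here I must pin down the global shape of the in- and out-neighborhoods rather than merely their contiguity. First, acyclicity forces $a < b$ for every such pair: otherwise $v \to p_b \to p_a \to v$ would be a directed triangle. Thus every in-neighbor index is strictly below every out-neighbor index; setting $i = \max$ in-index and $j = \min$ out-index gives $i < j$. To show the in-neighbors fill the initial segment, take any $k < a$ for an in-neighbor $p_a$ and apply the shortcut argument to $p_k \to p_a \to v \to p_b$ with chord $p_k \to p_b$ (present since $k < b$): this forces $v \sim p_k$, and the possibility $v \to p_k$ is excluded because it would require $a < k$ against $k < a$, leaving $p_k \to v$. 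Symmetrically, for $k > b$ with $p_b$ an out-neighbor, the path $p_a \to v \to p_b \to p_k$ with chord $p_a \to p_k$ forces $v \sim p_k$, and $p_k \to v$ is excluded since it would require $k < b$, leaving $v \to p_k$. Hence $I_v = \{p_1,\dots,p_i\}$ and $O_v = \{p_j,\dots,p_m\}$ with $i < j$, which is exactly type C.

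Finally I would observe that the three cases are exhaustive: any vertex has only out-edges, only in-edges, or both, and a vertex with no clique-neighbor is vacuously a source and may be assigned to type A (with empty interval). Since every vertex of $E_{n-m}$ is thereby classified, the claimed partition follows. The delicate point, and the one I would write out most carefully, is the mixed case, because there the shortcut arguments must be combined with the acyclicity-driven index inequality $a < b$ to rule out the "wrong" orientation of a newly discovered edge, rather than merely its absence.
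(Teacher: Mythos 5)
This statement appears in the paper only as a cited preliminary (Theorem~\ref{th2}, attributed to the Kitaev--Pyatkin work on split graphs); the paper itself gives no proof to compare against. Your argument is correct and is essentially the standard one from that source: reduce to the transitive (hence Hamiltonian-path) orientation of \(K_m\), and repeatedly apply the shortcut condition to four-vertex paths, combined with acyclicity to force \(a<b\) whenever \(p_a\to v\to p_b\), which pins down the interval shapes of the in- and out-neighborhoods in all three cases.
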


\begin{theorem}\label{th1}\cite{kitaev2021word} Let \( S_n = (E_{n-m}, K_m) \) be a graph with a semi-transitive orientation, where \( P = p_1 \rightarrow p_2 \rightarrow \cdots \rightarrow p_m \) is the longest directed path in \( K_m \). For a vertex \( x \in E_{n-m} \) of Type C, the following conditions must be satisfied:

\begin{itemize}

    \item There does not exist any vertex \( y \in I_{n-m} \) of Type A or Type B that is adjacent to both \( p_{|I_x|} \) and \( p_{m - |O_x| + 1} \).

    \item Furthermore, there is no vertex \( y \in I_{n-m} \) of Type C such that either its source group \( I_y \) or sink group \( O_y \) contains both \( p_{|I_x|} \) and \( p_{m - |O_x| + 1} \).

\end{itemize}

\end{theorem}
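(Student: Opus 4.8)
The plan is to apply the semi-transitivity condition directly to short directed paths running through the Type C vertex $x$. Since $K_m$ is a clique, every semi-transitive (indeed every acyclic) orientation restricts to a transitive tournament on it, so $p_a \to p_b$ whenever $a < b$. Writing $i = |I_x|$ and $j = m - |O_x| + 1$, the defining property of a Type C vertex gives $p_i \in I_x$ and $p_j \in O_x$, hence $p_i \to x$ and $x \to p_j$; moreover $i < j$, so $p_i, x, p_j$ are three distinct vertices forming the directed path $p_i \to x \to p_j$. This \emph{bridge} through $x$ is the object I would exploit throughout, and note that since $y$ lies in the independent set we always have $y \neq x$ with $xy \notin E(G)$.

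For the first bullet, I would suppose toward a contradiction that a Type A or Type B vertex $y$ is adjacent to both $p_i$ and $p_j$. If $y$ is of Type A it is a source, so $y \to p_i$ and $y \to p_j$, and prepending $y$ to the bridge yields the directed path $y \to p_i \to x \to p_j$; if $y$ is of Type B it is a sink, so $p_i \to y$ and $p_j \to y$, and appending $y$ gives $p_i \to x \to p_j \to y$. In either case one obtains a directed path of length three (on the four distinct vertices $y, p_i, x, p_j$) whose two endpoints are joined by an edge, namely the shortcut $y \to p_j$ in the first case and $p_i \to y$ in the second. By Theorem~\ref{ch}'s underlying semi-transitivity condition, the four vertices must then induce a transitive tournament; in particular $x$ and $y$ must be adjacent, contradicting the fact that both belong to the independent set.

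For the second bullet I would run the identical argument for a Type C vertex $y$. If $I_y$ contains both $p_i$ and $p_j$ then $p_i \to y$ and $p_j \to y$, reproducing exactly the Type B configuration and the path $p_i \to x \to p_j \to y$ with shortcut $p_i \to y$; if $O_y$ contains both then $y \to p_i$ and $y \to p_j$, reproducing the Type A configuration and the path $y \to p_i \to x \to p_j$ with shortcut $y \to p_j$. Either way semi-transitivity again forces the nonexistent edge $xy$. I would also remark that the remaining \emph{straddling} configuration for a Type C vertex, in which $p_i \in I_y$ while $p_j \in O_y$, yields the two edges $p_i \to y$ and $y \to p_j$ pointing through $y$ rather than consistently, so it produces no length-three directed path through $x$ and hence no obstruction. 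This is precisely why the statement restricts attention to $I_y$ or $O_y$ containing both vertices.

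The routine parts, namely checking that the four vertices are always distinct and that each relevant edge is oriented as claimed, are immediate from the definitions in Theorem~\ref{th2}. The one point that requires genuine care, and which I expect to be the main obstacle, is the orientation bookkeeping: one must verify that, given $y$'s type together with its adjacency to both $p_i$ and $p_j$, the two edges incident to $y$ always point the same way (both out of $y$ or both into $y$), since this is exactly what lets $y$ extend the bridge into a genuine directed path rather than a mere walk. This is where the consecutiveness of the neighborhood for Types A and B, and the prefix/suffix structure of $I_y$ and $O_y$ for Type C, enter the argument.
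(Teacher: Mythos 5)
Your argument is correct: the bridge $p_{|I_x|} \rightarrow x \rightarrow p_{m-|O_x|+1}$, extended by $y$ on the appropriate end and closed by the shortcut edge from $y$'s source/sink property, forces a transitive tournament containing the non-edge $xy$, which is exactly the standard shortcut obstruction. The paper itself states this theorem as a cited result from the literature without reproducing a proof, and your reasoning matches the argument used there, so there is nothing further to compare.
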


By combining the previous results, the semi-transitive orientations of split graphs can be fully classified as follows:

\begin{theorem}\cite{kitaev2021word} An orientation of a split graph \( S_n = (E_{n-m}, K_m) \) is semi-transitive if and only if the following conditions hold:

\begin{itemize}

    \item \( K_m \) is oriented transitively.

    \item Each vertex in \( E_{n-m} \) is of one of the three types described in Theorem~\ref{th2}.

    \item The restrictions in Theorem~\ref{th1} are satisfied.

\end{itemize}

\end{theorem}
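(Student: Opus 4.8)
\section*{Proof proposal}

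The plan is to prove the equivalence by treating the two implications separately, since the forward direction is essentially a repackaging of results already established, while the backward direction carries the genuine content. For the forward implication, suppose the given orientation of \( S_n \) is semi-transitive. A semi-transitive orientation is by definition acyclic, so the earlier theorem on cliques forces the induced orientation on \( K_m \) to be transitive, which gives the first condition. The second condition is exactly the conclusion of Theorem~\ref{th2}, which asserts that any semi-transitive orientation sorts every vertex of \( E_{n-m} \) into Type A, B, or C relative to the longest directed path \( P = p_1 \to \cdots \to p_m \). The third condition is precisely the content of Theorem~\ref{th1}. Thus all three conditions hold, and no new argument is needed here beyond citing these results.

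The substance lies in the backward implication. Assume \( K_m \) is transitively oriented, every vertex of \( E_{n-m} \) is of one of the three types, and the restrictions of Theorem~\ref{th1} hold; I must show the orientation is semi-transitive, i.e. acyclic and shortcut-free. For acyclicity, I first observe that a directed cycle cannot contain two vertices of \( E_{n-m} \) consecutively, since the independent set has no internal edges; moreover a Type A vertex is a source and a Type B vertex is a sink, so neither can lie on a cycle. Hence a hypothetical cycle would pass only through clique vertices and Type C vertices. Each edge of the transitively oriented \( K_m \) strictly increases the position index along \( P \), and traversing a Type C vertex \( v \) also strictly increases the index, since it enters from \( I_v = \{p_1,\ldots,p_i\} \) and leaves to \( O_v = \{p_j,\ldots,p_m\} \) with \( i < j \). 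The index would therefore strictly increase around the whole cycle, which is impossible. This establishes acyclicity.

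It remains to rule out shortcuts: a directed path \( u_1 \to \cdots \to u_t \) together with an edge \( u_1 \to u_t \) whose induced subgraph is not a transitive tournament. Because \( u_1 \) and \( u_t \) are adjacent and \( E_{n-m} \) is independent, at least one endpoint lies in \( K_m \), and independence forces the path to alternate between clique vertices and independent-set vertices in a controlled way. I would carry out a case analysis on how many and which types of \( E_{n-m} \)-vertices appear among \( u_1,\ldots,u_t \): a shortcut confined to \( K_m \) is impossible because \( K_m \) is transitive; a shortcut involving a single Type A, B, or C vertex is ruled out directly by the definition of that type together with transitivity of the clique; and the remaining cases involve a Type C vertex interacting with a second independent-set vertex through the clique.

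I expect this last family of cases to be the main obstacle, and this is precisely where the two restrictions of Theorem~\ref{th1} are invoked. They forbid exactly the configurations in which the last vertex \( p_{|I_x|} \) of the source group and the first vertex \( p_{m-|O_x|+1} \) of the sink group of a Type C vertex \( x \) are ``bridged'' by another independent vertex adjacent to both, which is the only way a non-transitive shortcut passing through two independent-set vertices could arise. Verifying that these restrictions cover every such configuration, and that no other placement of \( u_1 \) and \( u_t \) produces a shortcut, completes the proof.
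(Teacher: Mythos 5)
The paper does not prove this theorem at all: it is quoted verbatim from \cite{kitaev2021word} as a preliminary result, so there is no in-paper proof to compare your attempt against. Judged on its own, your outline follows the natural (and, as far as the structure goes, the standard) route: the forward direction is an assembly of the preceding cited results, and the backward direction splits into acyclicity plus exclusion of shortcuts. Your acyclicity argument is correct and complete --- Type A/B vertices cannot lie on a cycle, two independent-set vertices cannot be consecutive, and the position index along \(P\) strictly increases both across clique edges and across a Type C vertex (since every in-neighbour lies in \(I_v=\{p_1,\dots,p_i\}\) and every out-neighbour in \(O_v=\{p_j,\dots,p_m\}\) with \(i<j\)).

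The genuine gap is in the shortcut analysis, which is announced rather than carried out. You dispose of the easy cases (shortcut inside \(K_m\); a single Type A, B, or C vertex on the path), and those dismissals are correct, but the case you yourself identify as ``the main obstacle'' --- a potential shortcut whose vertex set contains two independent-set vertices --- is left at the level of ``verifying that these restrictions cover every such configuration \dots completes the proof.'' That verification is the entire content of the sufficiency direction: one must show that \emph{any} non-transitive shortcut with two vertices of \(E_{n-m}\) forces one of them to be a Type C vertex \(x\) and the other to be adjacent to both \(p_{|I_x|}\) and \(p_{m-|O_x|+1}\) (or, for a second Type C vertex \(y\), to contain both in \(I_y\) or in \(O_y\)), and conversely that no other two-vertex configuration can produce a shortcut. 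In particular you need to argue why adjacency to both of these two \emph{specific} clique vertices is the exact obstruction, rather than adjacency to some other pair straddling the gap between \(I_x\) and \(O_x\); this hinges on the fact that \(p_{|I_x|}\to x\to p_{m-|O_x|+1}\) is the extremal path through \(x\), and that any bridging vertex \(y\) then yields a path \(u_1\to\cdots\to u_t\) with \(u_1 u_t\) an edge but \(x\) and \(y\) non-adjacent. Without this case-by-case verification the backward implication is not proved.
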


Next, we state an equivalent formulation of the above theorem in terms of the neighborhood \( N(v) \) of a vertex \( v \in E \). 
\begin{theorem}
\cite{kitaev2024semi} \label{th} A split graph \(S\) is semi-transitive if and only if the vertices of the clique \(K\) can be labeled from 1 to \( k = |K| \) such that the following conditions hold:

\begin{itemize}
    \item For each \( v \in E \), the neighborhood \( N(v) \subseteq K \) is of the form \( [a, b] \) with \( a \leq b \), or \( [1, a] \cup [b, k] \) with \( a < b \).

    \item If \( N(u) = [a_1, b_1] \) and \( N(v) = [1, a_2] \cup [b_2, k] \), with \( a_1 \leq b_1 \) and \( a_2 < b_2 \), then either \( a_1 > a_2 \) or \( b_1 < b_2 \).

    \item If \( N(u) = [1, a_1] \cup [b_1, k] \) and \( N(v) = [1, a_2] \cup [b_2, k] \), with \( a_1 < b_1 \) and \( a_2 < b_2 \), then \( a_2 < b_1 \) and \( a_1 < b_2 \).
\end{itemize}

Here, for any integers \( a \leq b \), the notation \( [a, b] \) denotes the set \( \{a, a+1, \ldots, b\} \).
\end{theorem}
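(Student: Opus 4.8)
\section*{Proof proposal}

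The plan is to derive this neighborhood reformulation directly from the classification theorem stated just above it (the one combining Theorems~\ref{th2} and~\ref{th1}), which characterizes the semi-transitive orientations of a split graph through the three vertex types and the restrictions of Theorem~\ref{th1}. The central device is a canonical labeling of the clique. Since any semi-transitive orientation makes $K_m$ a transitive tournament, its unique topological order is a Hamiltonian directed path, which is exactly the longest directed path $P = p_1 \to \cdots \to p_m$; I would label $p_i$ by $i$, so that $k = m$ and ``$i \to j$ in $K$'' becomes ``$i < j$''. The whole argument then reduces to a faithful translation between the language of types and restrictions and the language of neighborhoods $N(v) \subseteq \{1,\ldots,k\}$, performed in both directions.

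For the forward direction I would start from a semi-transitive orientation and read off each $E$-vertex's neighborhood under this labeling. A \emph{type A} or \emph{type B} vertex is adjacent to $\{p_i,\ldots,p_j\}$, giving $N(v) = [a,b]$; a \emph{type C} vertex has $I_v = \{p_1,\ldots,p_i\}$ and $O_v = \{p_j,\ldots,p_m\}$, giving $N(v) = [1,i] \cup [j,m] = [1,a]\cup[b,k]$ with $a = i < j = b$. This yields the first condition. The key bookkeeping step is the index computation for a type C vertex $x$: from $|I_x| = a$ and $|O_x| = m-b+1$ one obtains $p_{|I_x|} = p_a$ and $p_{m-|O_x|+1} = p_b$, so the two endpoints singled out in Theorem~\ref{th1} are precisely $p_a$ and $p_b$. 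Feeding these into the first bullet of Theorem~\ref{th1} (no type A/B vertex $u$ with $N(u) = [a_1,b_1]$ is adjacent to both $p_a$ and $p_b$) gives $\neg(a_1 \le a \wedge b_1 \ge b)$, i.e. $a_1 > a$ or $b_1 < b$, which is the second condition; feeding them into the second bullet (no type C vertex $y$ has $I_y$ or $O_y$ containing both $p_a$ and $p_b$) gives $a_2 < b$ and $a < b_2$, which is the third condition.

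For the converse I would build an orientation from a labeling satisfying the three conditions: orient $K$ by the linear order $1 < \cdots < k$, declare every interval vertex to be a source (type A), and give every vertex with an $[1,a]\cup[b,k]$ neighborhood the type C structure $I_v = [1,a]$, $O_v = [b,k]$. One then checks that this orientation meets every requirement of the classification theorem: $K$ is transitive by construction, each $E$-vertex is of type A or C, and reversing the substitutions above shows that the second and third conditions return exactly the two restrictions of Theorem~\ref{th1}. The classification theorem (together with Theorem~\ref{ch}) then certifies semi-transitivity, so the equivalence follows; in particular I would not need to verify acyclicity by hand, since it is subsumed by that theorem.

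The main obstacle I anticipate is not a single deep step but the care required in the converse to see that the construction is well defined and that no constraint is silently lost. Two points deserve attention. First, one must invoke the maximality of the clique to make the two neighborhood shapes unambiguous and disjoint: a type C neighborhood $[1,a]\cup[b,k]$ has $a < b$, and were the gap empty (i.e. $b = a+1$) the vertex would be adjacent to all of $K$, contradicting maximality, while a genuine interval $[a,b]$ with $a \ge 2$ or $b \le k-1$ misses $p_1$ or $p_k$ and hence cannot be type C. Second, one must observe that the restrictions of Theorem~\ref{th1} are triggered only by type C vertices and never constrain a pair of interval vertices against each other; this is what allows each interval vertex to be freely designated a source (type A), equivalently a sink (type B), without affecting validity, and it is also why the symmetric third condition correctly captures the type-C/type-C interaction for both orderings of the pair at once.
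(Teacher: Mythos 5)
The paper does not prove this statement at all: it is imported verbatim from the cited reference \cite{kitaev2024semi} as a preliminary, so there is no in-paper argument to compare yours against. That said, your derivation is correct and is the natural one: identifying the clique labeling with the topological order of the transitive tournament (equivalently the longest directed path $p_1\to\cdots\to p_m$), the dictionary type A/B $\leftrightarrow$ interval neighborhood, type C $\leftrightarrow$ $[1,a]\cup[b,k]$, and the index computation $p_{|I_x|}=p_a$, $p_{m-|O_x|+1}=p_b$ turn the two restrictions of Theorem~\ref{th1} into exactly the second and third bullets (including the symmetry of the third in $u\leftrightarrow v$). Your two cautionary points for the converse are also the right ones to flag: maximality of the clique is what keeps the two neighborhood shapes from collapsing into ``all of $K$,'' and the freedom to make every interval vertex a source is legitimate because the restrictions of Theorem~\ref{th1} never constrain two type A/B vertices against each other. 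The only cosmetic gap is the degenerate case $N(v)=\emptyset$, which the theorem statement itself glosses over and which is harmless by Lemma~\ref{lemma}; nothing in your argument breaks. In short: a correct, self-contained proof of a result the paper only cites.
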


For another equivalent characterization based on the \emph{circular ones property}, we refer the reader to the relevant results presented in~\cite{kitaev2024semi}. Minimal forbidden induced subgraph characterizations of certain subclasses of split graphs have been established in the literature and form the foundation for the results presented in this work. As a complete characterization of split graphs in terms of minimal forbidden induced subgraphs remains unknown, researchers primarily focused on restricted subclasses to make progress. These restrictions typically include:
\begin{itemize}

    \item bounding the degree of vertices in the independent set \( E \),

    \item fixing the size of the clique \( K \), and

    \item fixing the size of the independent set \( E \).

\end{itemize}
\begin{theorem}\cite{kitaev2021word} Let \( S_n = (E_{n-m}, K_m) \) be a split graph with \( m \leq 3 \). Then \( S_n \) is word-representable.

\end{theorem}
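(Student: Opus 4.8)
The plan is to apply Theorem~\ref{th}, the neighborhood characterization of semi-transitive split graphs, directly to the case $m \leq 3$. Since word-representability is equivalent to admitting a semi-transitive orientation (Theorem~\ref{ch}), and for split graphs this reduces to the labeling condition of Theorem~\ref{th}, it suffices to exhibit, for any split graph $S_n = (E_{n-m}, K_m)$ with $m \leq 3$, a labeling of the clique vertices $1, \ldots, m$ that satisfies the three bullet conditions.

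First I would dispose of the trivial cases $m = 1$ and $m = 2$. When $m = 1$, the clique is a single vertex and the maximality assumption forces $E_{n-m}$ to consist of isolated vertices, so every neighborhood is empty or a singleton and the conditions hold vacuously. When $m = 2$, each $v \in E$ has $N(v) \subseteq \{1, 2\}$; by the maximality of the clique no $v$ is adjacent to both, so every neighborhood is of the form $[a,b]$ (namely $\emptyset$, $[1,1]$, or $[2,2]$), and since no neighborhood is of the split form $[1,a]\cup[b,k]$ with $a<b$, the second and third conditions are vacuous. Thus any labeling works.

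The substantive case is $m = 3$. Here I would label the clique $\{1,2,3\}$ and observe that for each $v \in E$ the neighborhood $N(v)$ is some subset of $\{1,2,3\}$, and by clique-maximality $N(v) \neq \{1,2,3\}$. The key point is that every proper subset of $\{1,2,3\}$ is already an interval $[a,b]$ under the natural ordering \emph{except} possibly the set $\{1,3\}$, which has the split form $[1,1]\cup[3,3]$ with $a=1<b=3$. So the first condition of Theorem~\ref{th} is automatically satisfied for this fixed labeling. It then remains to verify the second and third conditions. The third condition concerns pairs of split-type neighborhoods $[1,a_1]\cup[b_1,3]$ and $[1,a_2]\cup[b_2,3]$ with $a_i<b_i$; on three vertices the only such neighborhood is $\{1,3\}$ itself (forcing $a_i=1, b_i=3$), and one checks directly that $a_2=1<3=b_1$ and $a_1=1<3=b_2$, so the condition holds. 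The second condition pairs an interval $N(u)=[a_1,b_1]$ with a split neighborhood $N(v)=[1,a_2]\cup[b_2,3]$; again the split neighborhood must be $\{1,3\}$ so $a_2=1$ and $b_2=3$, and I would check that every interval $[a_1,b_1]\subseteq\{1,2,3\}$ satisfies $a_1>1$ or $b_1<3$, which fails only for $[1,3]=\{1,2,3\}$ — but that is excluded by clique-maximality. Hence the second condition holds as well.

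I do not expect a genuine obstacle here: the result is essentially a finite check enabled by the smallness of the clique, and the main content is simply recognizing that on at most three clique vertices the only ``dangerous'' neighborhood shape is $\{1,3\}$, whose interactions are all benign once the full set $\{1,2,3\}$ is forbidden by maximality. The one point requiring care is making the maximality hypothesis explicit at each step, since it is precisely what rules out the neighborhood $\{1,2,3\}$ that would otherwise violate the second condition; I would state this dependence clearly rather than leave it implicit.
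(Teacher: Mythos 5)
Your argument is correct, but note that the paper does not prove this statement at all: it is quoted from \cite{kitaev2021word} as a known result, so there is no in-paper proof to compare against. Your derivation from Theorem~\ref{ch} together with the neighborhood characterization of Theorem~\ref{th} is a legitimate and self-contained alternative: on a clique of size at most $3$ every admissible neighborhood other than $\{1,3\}$ is an interval, the only split-form neighborhood forces $a=1$, $b=3$, and the second and third conditions then reduce to excluding the full neighborhood $\{1,2,3\}$, which is exactly what the clique-maximality convention does; you are right to make that dependence explicit, since without it the fixed labeling $1<2<3$ genuinely fails condition two. Two small points of hygiene: (i) Theorem~\ref{th} comes from a later paper than the result you are proving, so your argument is not the original one, though it is not circular within this paper's logical framework; (ii) a vertex $v\in E$ with $N(v)=\emptyset$ is not literally of the form $[a,b]$ with $a\le b$, so you should dispose of degree-$0$ (and, if you like, degree-$1$) vertices via Lemma~\ref{lemma} before invoking the labeling conditions, rather than calling the conditions ``vacuous.'' With those caveats the finite check is complete and the proof stands.
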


\begin{lemma}\cite{kitaev2021word}\label{lemma} Let \( S_n = (E_{n-m}, K_m) \) be a split graph, and let \( S_{n+1} \) be a split graph obtained from \( S_n \) by one of the following operations:

\begin{itemize}

    \item Adding a vertex of degree 0 to \( E_{n-m} \),

    \item Adding a vertex of degree 1 to \( E_{n-m} \),

    \item Duplicating (i.e., copying) a vertex from either \( E_{n-m} \) or \( K_m \), where the new vertex has the same neighborhood as the original.

\end{itemize}

Then \( S_n \) is word-representable if and only if \( S_{n+1} \) is word-representable.

\end{lemma}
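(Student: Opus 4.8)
The plan is to invoke Theorem~\ref{ch} and argue entirely with semi-transitive orientations, proving the two implications separately. The \emph{only if} direction is immediate: in each of the three operations $S_n$ is recovered from $S_{n+1}$ by deleting the freshly added or duplicated vertex, so $S_n$ is an induced subgraph of $S_{n+1}$; restricting a semi-transitive orientation of $S_{n+1}$ to that induced subgraph leaves it acyclic and shortcut-free, hence semi-transitive. Thus the real content lies in the \emph{if} direction, where I fix a semi-transitive orientation $O$ of $S_n$ and extend it over the new vertex.

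For the two low-degree cases the extension is essentially forced. A degree-$0$ vertex is added with no incident edges; since no directed path or cycle can pass through an isolated vertex, $O$ remains acyclic and shortcut-free. A degree-$1$ vertex $v$ has its unique neighbour in $K_m$, and I orient that single edge either way (a degree-$1$ vertex cannot lie on a cycle, so acyclicity is automatic). As $v$ can never be internal to a directed path and its only edge runs to its unique neighbour, no directed path through $v$ can carry a nontrivial shortcut edge at $v$; hence no semi-transitivity violation involves $v$, and the extended orientation is semi-transitive.

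The crux is duplication. Let $v$ be duplicated and $v'$ its copy: if $v\in E_{n-m}$ then $v,v'$ are non-adjacent false twins with $N(v')=N(v)$, and if $v\in K_m$ then $v,v'$ are adjacent true twins with equal closed neighbourhoods. In either case I orient each edge at $v'$ to mirror the corresponding edge at $v$ in $O$, and in the true-twin case I orient the extra edge as $v\to v'$ (a local check then makes every triangle $\{v,v',w\}$ a transitive tournament). To show the resulting orientation $O'$ is semi-transitive I use the substitution map $\phi$ that identifies $v'$ with $v$ and fixes all other vertices; by construction $\phi$ sends each directed edge of $O'$ to a directed edge of $O$, collapsing only the edge $vv'$ in the true-twin case. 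A directed cycle in $O'$ would then map to a closed directed walk of positive length in the acyclic $O$, which is impossible, so $O'$ is acyclic. For the shortcut condition, given a path $P=u_1\to\cdots\to u_t$ with edge $u_1\to u_t$ that fails to induce a transitive tournament, the case $v'\notin P$ is already a violation in $O$, and the case where exactly one twin lies on $P$ makes $\phi$ an isomorphism from $P$ onto a configuration in $O$ that semi-transitivity forces to be a transitive tournament — both contradictions.

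The subcase I expect to be the main obstacle is when both $v$ and $v'$ lie on $P$. Here the equality of their neighbourhoods produces an edge from the later twin back to an out-neighbour occurring earlier on $P$, which closes a directed cycle and contradicts the acyclicity already established; the only escape is that the twins are consecutive (possible only in the true-twin case), and then inserting $v'$ immediately after $v$ into the transitive tournament $\phi(P)$ shows $P$ itself is a transitive tournament, again a contradiction. Disposing of this subcase finishes the verification that $O'$ is semi-transitive, whence $S_{n+1}$ is word-representable. I would also note that, for split graphs specifically, all three cases of the \emph{if} direction can be read off directly from the interval characterization of Theorem~\ref{th}: an $E$-duplicate reuses the same labelling and the same interval, a degree-$\le 1$ vertex has the trivially compatible neighbourhood $[a,a]$ or $\emptyset$, and a $K$-duplicate corresponds to an order-preserving insertion of one new label immediately after the original, under which every admissible interval or wrap-around neighbourhood and every pairwise compatibility condition is preserved.
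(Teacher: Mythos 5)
The paper does not prove this lemma at all: it is stated as a known result imported from \cite{kitaev2021word}, so there is no in-paper argument to compare against. Your proposal is a correct, self-contained reconstruction via Theorem~\ref{ch}. The hereditary (``only if'') direction and the degree-$0$/degree-$1$ cases are handled exactly as one would expect (every vertex of a shortcut has degree at least $2$ inside the shortcut, so a pendant vertex can never participate in a violation). The duplication case is the real content, and your substitution map $\phi$ does the job: mirroring the orientation at the twin, orienting the twin edge $v\to v'$ in the adjacent case, and then pushing any putative cycle or shortcut of $O'$ down to $O$ is a complete argument, including the delicate subcase where both twins lie on the offending path (non-consecutive twins close a directed cycle via the mirrored edge to the successor of the earlier twin; consecutive twins let you lift the transitive tournament on $\phi(P)$ back to $P$). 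Two small points worth tightening if this were written out in full: (i) when a twin is an endpoint of the path, the shortcut edge $u_1\to u_t$ itself must also be transported by $\phi$, which the mirroring provides but which deserves an explicit sentence; and (ii) the lemma's phrase ``same neighborhood'' is ambiguous between open and closed neighborhood for a clique vertex --- your argument covers the true-twin reading, and the false-twin reading is the strictly easier variant of the same computation, so nothing is lost. Your closing observation that, for split graphs specifically, all three operations can be read off from the labelling characterization of Theorem~\ref{th} is a nice alternative route the paper could have taken but did not.
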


As a direct consequence of the previous lemma, we restrict our attention to the split graphs \( S_n = (E_{n-m}, K_m) \) satisfying the following properties:

\begin{itemize}

    \item No two vertices in \( S_n \) share the same neighborhood, up to adjacency between the vertices themselves.

    \item At most one vertex in the clique \( K_m \) is not adjacent to any vertex in the independent set \( E_{n-m} \).

    \item Every vertex in \( E_{n-m} \) has degree at least two.

\end{itemize}

These structural simplifications significantly reduce redundancy and assist in the development of the following characterization results corresponding to the types of restrictions discussed above.

\begin{theorem}\cite{kitaev2021word} Let \( m \geq 1 \) and \( S_n = (E_{n-m}, K_m) \) be a split graph, where each vertex in \( E_{n-m} \) has degree at most 2. Then \( S_n \) is word-representable if and only if it does not contain \( T_2 \) (as shown in Figure~\ref{fig:graph-configurations}) or any graph \( A_\ell \) (from Figure 6. of \cite{kitaev2021word}) as an induced subgraph.

\end{theorem}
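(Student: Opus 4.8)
The plan is to prove both directions through the neighborhood characterization of Theorem~\ref{th}, after first normalizing the graph. By Lemma~\ref{lemma} together with the structural simplifications listed after it, I may assume that every vertex of $E_{n-m}$ has degree exactly $2$, that no two vertices of $S_n$ share a neighborhood, and that at most one clique vertex is nonadjacent to $E_{n-m}$; vertices of degree $0$ or $1$ are irrelevant to word-representability and can be deleted or reinserted without changing the answer. Under this normalization each $v\in E_{n-m}$ is determined by the unordered pair $N(v)\subseteq K_m$ of its two clique neighbors. I will also use the fact that word-representability is hereditary under induced subgraphs (the restriction of a semi-transitive orientation to an induced subgraph is again semi-transitive), so that the ``only if'' direction reduces to showing that $T_2$ and every $A_\ell$ fail to be word-representable.

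The central device is an auxiliary multigraph $H$ on the vertex set $K_m$ in which each $v\in E_{n-m}$ contributes the edge joining the two vertices of $N(v)$. Specializing Theorem~\ref{th} to neighborhoods of size $2$, a neighborhood is admissible only as an interval $[a,a+1]$ (a consecutive pair) or as the wrap-around pair $[1,1]\cup[m,m]=\{1,m\}$; hence a semi-transitive orientation exists precisely when $K_m$ can be cyclically ordered so that every edge of $H$ joins two circularly consecutive vertices, i.e. when $H$ is a subgraph of a Hamiltonian cycle on $K_m$. Concretely this says $H$ must be either a disjoint union of paths or a single spanning cycle. I will check separately that the compatibility conditions (2) and (3) of Theorem~\ref{th} impose nothing further here: with all neighborhoods of size $2$ there is at most one wrap-around vertex (so (3) is vacuous), and in the presence of a wrap-around (2) reduces to the inequality $a_1>1$ or $a_1+1<m$, which holds for all $m\ge 3$.

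For the ``only if'' direction I will show that each forbidden graph violates the path-or-spanning-cycle condition. The graph $T_2$ forces some clique vertex to acquire $H$-degree $3$; since a vertex on a cycle has only two neighbors, no circular-consecutive placement exists, and by heredity any $S_n$ containing $T_2$ is not word-representable. Each $A_\ell$ encodes a cycle in $H$ supported on a proper subset of $K_m$ --- a cycle together with at least one further clique vertex; inserting that extra vertex into the circular order necessarily breaks one of the cycle edges, so again no placement exists. Thus containing $T_2$ or some $A_\ell$ rules out a semi-transitive orientation, which by Theorem~\ref{ch} rules out word-representability.

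For the ``if'' direction I will argue the converse structurally. Absence of $T_2$ guarantees that every clique vertex has $H$-degree at most $2$, so $H$ decomposes into disjoint paths and cycles. Absence of every $A_\ell$ then forbids any cycle that leaves a clique vertex uncovered; since at most one clique vertex can be isolated and all $E$-vertices have degree $2$, this means $H$ contains a cycle only if that cycle spans all of $K_m$. In either remaining case --- a disjoint union of paths, or a single spanning cycle --- I will write down the circular order explicitly (concatenating the path-segments around the circle, respectively following the spanning cycle), choose the cut that turns it into a linear labeling $1,\dots,m$ (placing the cut in a gap between segments in the path case, so that no wrap-around is even needed), and verify directly that the resulting neighborhoods satisfy all three conditions of Theorem~\ref{th}. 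By Theorem~\ref{ch} this produces a word-representation. The main obstacle I anticipate is the matching step in sufficiency: proving that excluding the entire family $A_\ell$ is exactly what is needed to eliminate every non-spanning cyclic component (enumerating how a cycle can coexist with an extra isolated vertex or a path component, and confirming each such configuration occurs as an induced $A_\ell$), and then confirming that the compatibility conditions of Theorem~\ref{th} survive the single unavoidable wrap-around pair in the spanning-cycle case.
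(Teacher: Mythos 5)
This theorem is quoted from \cite{kitaev2021word}; the paper you were given states it without proof, so there is no in-paper argument to compare against. Judged on its own merits, your outline is sound and is essentially the ``right'' modern proof: after normalizing via Lemma~\ref{lemma}, every $E$-vertex has a size-2 clique neighborhood, and Theorem~\ref{th} then forces each such neighborhood to be either a consecutive pair $\{i,i+1\}$ or the unique size-2 co-interval $\{1,k\}$, i.e.\ a circularly consecutive pair; your checks that conditions (2) and (3) of Theorem~\ref{th} are automatic here (at most one co-interval vertex after deduplication, and $a_1>1$ or $a_1+1<k$ holds for $k\ge 3$, while $k\le 3$ is always word-representable) are correct. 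The translation to the auxiliary graph $H$ being a disjoint union of paths or a single spanning cycle, the identification of a degree-$3$ clique vertex in $H$ with an induced $T_2$, and of a non-spanning cycle plus a leftover clique vertex with an induced $A_\ell$ (note the $E$-vertices of the cycle are automatically nonadjacent to the extra clique vertex since their two neighbors lie on the cycle, so the induced subgraph really is $A_\ell$), all go through. Two small points worth making explicit in a full write-up: (i) the normalization must be checked to neither create nor destroy induced copies of $T_2$ and $A_\ell$ (easy, since all $E$-vertices of those graphs have degree exactly $2$ and pairwise distinct neighborhoods); (ii) your route is anachronistic relative to the cited source --- Theorem~\ref{th} comes from \cite{kitaev2024semi}, which postdates \cite{kitaev2021word}, whose original proof instead runs through the Type A/B/C classification of Theorems~\ref{th2} and~\ref{th1}. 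Your version is arguably cleaner, at the cost of resting on the later characterization.
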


\begin{theorem}\cite{kitaev2021word} Let \( S_n = (E_{n-4}, K_4) \) be a split graph. Then \( S_n \) is word-representable if and only if it does not contain \( T_1, T_2, T_3 , T_4 \) (as shown in Figure~\ref{fig:graph-configurations}) as induced subgraphs.

\end{theorem}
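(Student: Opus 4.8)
The proof rests on Theorem~\ref{ch} (word-representability is equivalent to semi-transitivity) together with the neighborhood-labeling criterion of Theorem~\ref{th}, and on the fact that word-representability is \emph{hereditary}: every induced subgraph of a word-representable graph is again word-representable. Throughout I would invoke the reductions following Lemma~\ref{lemma}, so that every $v \in E_{n-4}$ satisfies $2 \le |N(v)| \le 3$ (the upper bound coming from maximality of the clique) and no two vertices of $E_{n-4}$ share a neighborhood in $K_4$. Note also that each $T_i$ is forced to be a split graph on a genuine $K_4$: if its clique had size at most $3$ it would be word-representable, contradicting that it is a forbidden configuration.

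\emph{Necessity.} I would first show that each of $T_1,T_2,T_3,T_4$ is itself non-word-representable. Since each is built on the clique $K_4$, Theorem~\ref{th} applies directly: it suffices to prove that no labeling of the four clique vertices by $1,\ldots,4$ simultaneously satisfies the three listed conditions. There are only $4! = 24$ labelings, collapsing to $12$ under the reversal symmetry $i \mapsto 5-i$, which preserves all three conditions (linear intervals map to linear intervals and wrapping intervals to wrapping intervals). For each $T_i$ this is a short finite check: one exhibits, for every surviving labeling, either a neighborhood that is neither of the form $[a,b]$ nor $[1,a]\cup[b,4]$, or a pair $(u,v)$ violating condition~2 or condition~3. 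Heredity then yields that any $S_n$ containing some $T_i$ as an induced subgraph is non-word-representable.

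\emph{Sufficiency.} This is the substantive direction. Assuming $S_n$ contains none of $T_1,\ldots,T_4$, I must produce a labeling of $K_4$ realizing Theorem~\ref{th}. The guiding observation is that condition~1 is exactly the ``circular ones'' statement that each $N(v)$ is an arc in the cyclic order $1-2-3-4-1$ on the clique, while conditions~2 and~3 constrain how wrapping arcs $[1,a]\cup[b,4]$ may overlap linear arcs and one another. Because $N(v)\subseteq K_4$ with $|N(v)|\in\{2,3\}$, every neighborhood lies in a fixed universe of at most $\binom{4}{2}+\binom{4}{3}=10$ subsets, so the neighborhood family of $S_n$ is a subcollection of this finite universe. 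The plan is to enumerate which families admit a valid labeling by running over the cyclic orderings of the four clique vertices (only three up to rotation and reflection) and, for each, over the placement of the cut that linearizes the cycle, then to identify each \emph{minimal} non-realizable family with an explicit induced subgraph — the claim being that these minimal obstructions are precisely $T_1,\ldots,T_4$. Concretely, given a conflict (an arc that cannot be made consecutive, or a forbidden overlap forced by conditions~2--3), I would isolate the few $E$-vertices responsible, take the subgraph they induce together with $K_4$, and match it against the four templates.

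The main obstacle is the completeness-and-minimality step in the sufficiency direction: one must verify that \emph{every} obstruction to a good labeling is witnessed by one of exactly these four induced subgraphs, with no smaller or extra configurations. This demands disciplined bookkeeping — for each of the three cyclic orders and each linearizing cut, checking that any neighborhood family avoiding all four templates survives in at least one linearization, and conversely translating each combinatorial failure of conditions~2 and~3 back into a concrete set of independent-set vertices whose induced subgraph is confirmed isomorphic to some $T_i$. Getting this correspondence exactly right, rather than merely up to containment of a larger forbidden graph, is the delicate part of the argument.
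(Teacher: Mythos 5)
This statement is not proved in the paper at all: it is quoted from \cite{kitaev2021word} as background, so there is no in-paper proof to compare against. Judged on its own terms, your framework is the right one and is in fact the same machinery the paper uses for its own Theorem~\ref{m}: reduce via Lemma~\ref{lemma} to graphs where every $v\in E$ has $2\le|N(v)|\le 3$ and distinct neighborhoods, observe that the neighborhood family then lives in a universe of at most $10$ subsets of $K_4$, and test realizability against the labeling criterion of Theorem~\ref{th}. Your necessity argument is essentially complete: for each $T_i$ the failure of every labeling is a genuine finite check (e.g.\ for $T_1$ the three $2$-element neighborhoods $\{1,2\},\{2,3\},\{1,3\}$ form a triangle, while the admissible $2$-element neighborhoods under any labeling are exactly the edges of the $4$-cycle on the clique, which is triangle-free), and heredity of word-representability finishes that direction.

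The genuine gap is that your sufficiency direction is a plan, not a proof. The entire content of the theorem lies in verifying that every neighborhood family over $K_4$ avoiding $T_1,\dots,T_4$ admits a valid labeling, and that every non-realizable family contains one of exactly these four configurations; you describe the enumeration ("run over the three cyclic orders and the linearizing cuts, isolate the responsible $E$-vertices, match against the templates") but do not execute any of it, and you explicitly flag the completeness-and-minimality step as the obstacle rather than resolving it. Compare with how the paper handles its own Theorem~\ref{m}: there the authors actually walk through the exhaustive case split on the multiset $\{d_E(v):v\in K\}$, exhibiting an explicit ordering of $E$ and $K$ for every realizable configuration and naming the forbidden subgraph produced by every non-realizable extension. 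An analogous explicit case list (or at least a representative sample showing how each of the $T_i$ arises as the unique minimal obstruction in its case) is what is missing before this could be accepted as a proof. There is also a small bookkeeping point you gloss over: after the Lemma~\ref{lemma} reductions you must argue that $T_i$-freeness transfers correctly between $S_n$ and its reduced form (none of the $T_i$ contains a degree-$\le 1$ vertex of $E$ or a duplicated pair, so this works, but it should be said).
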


\begin{theorem}\cite{chen2022representing}\label{th5} Let \( S_n = (E_{n-5}, K_5) \) be a split graph. Then \( S_n \) is word-representable if and only if it does not contain \( T_1, T_2, \ldots, T_8 \) (as shown in Figure\ref{fig:graph-configurations}) and \( T_9 \) (from Figure 3 of \cite{chen2022representing}) as induced subgraphs.

\end{theorem}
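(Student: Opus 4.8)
The plan is to treat "word-representable" and "admits a semi-transitive orientation" as interchangeable via Theorem~\ref{ch}, and to run the entire argument through the interval characterization of semi-transitive split graphs in Theorem~\ref{th}. The key preliminary observation is that word-representability is hereditary under induced subgraphs: restricting a representing word $w$ to the vertex set of an induced subgraph $H$ still represents $H$, so the class is closed under taking induced subgraphs. Because of this, the "only if" direction collapses to verifying that each of the nine graphs $T_1,\dots,T_9$ is itself non-word-representable; once that is done, any $S_n$ containing one of them induced is automatically non-representable.

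For necessity I would check each $T_i$ in turn. Every $T_i$ is a split graph whose clique has at most five vertices, so by Theorem~\ref{th} it is word-representable exactly when its clique vertices admit a labeling by $\{1,\dots,5\}$ rendering each neighbourhood $N(v)$, $v\in E$, an interval $[a,b]$ or a co-interval $[1,a]\cup[b,5]$ satisfying the two compatibility rules. Since there are only finitely many labelings (and, up to the dihedral symmetry of the \emph{circular-ones} ordering, very few essentially distinct ones), for each $T_i$ I would show that every labeling violates one of the three conditions of Theorem~\ref{th}, certifying non-representability. Here $T_1,\dots,T_4$ are precisely the obstructions already known for clique size four, which survive as induced subgraphs in the $K_5$ setting when the fifth clique vertex is deleted; the new obstructions $T_5,\dots,T_9$ are those whose failure genuinely involves all five clique vertices.

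For sufficiency, assume $S_n$ contains none of $T_1,\dots,T_9$. Using Lemma~\ref{lemma} I first reduce to the normalized form in which every vertex of $E$ has degree at least two, no two vertices of $E$ share a neighbourhood, and at most one clique vertex has no neighbour in $E$. Since the clique is $K_5$ and each $N(v)$ is a subset of size between $2$ and $4$, there are at most $\binom{5}{2}+\binom{5}{3}+\binom{5}{4}=25$ distinct neighbourhoods, whence $|E|\le 25$ and the configuration space is finite. The objective is then to produce a labeling of the five clique vertices realizing every $N(v)$ as an interval or co-interval compatible in the sense of Theorem~\ref{th}, i.e.\ to establish the circular-ones property of the neighbourhood family together with the source/sink restrictions of Theorems~\ref{th2} and~\ref{th1}.

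The hard part, and the real content of the statement, is this sufficiency direction: proving that the absence of the nine named subgraphs is exactly what forces such a labeling to exist. I would argue by contraposition. If no admissible labeling of $K_5$ exists, then by Theorem~\ref{th} every ordering of the clique fails one of the interval or compatibility conditions, and each distinct failure mode forces a small, explicit family of neighbourhoods to be present at once. Matching these forced families to the induced patterns of $T_1,\dots,T_9$ is the crux: one must show that every obstruction to the circular-ones-with-constraints property on five clique vertices is already witnessed by the handful of $E$-vertices appearing in one of the nine $T_i$. I expect this to be a finite but sizeable case analysis, best organized by the size profile of the neighbourhoods (how many $2$-, $3$-, and $4$-element neighbourhoods occur) and by which clique vertices they overlap; the main danger is combinatorial, namely overlooking one of the many neighbourhood configurations rather than any single deep difficulty.
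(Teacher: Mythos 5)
First, a point of orientation: the paper does not prove this statement at all --- it is quoted from \cite{chen2022representing} as a preliminary result (and is later invoked inside the proof of Theorem~\ref{m}). There is therefore no in-paper proof to compare against; I can only measure your plan against the method used in the cited work and in this paper's own analogous Theorem~\ref{m}. Your necessity argument is sound: heredity of word-representability under induced subgraphs is correct and standard, and verifying that each $T_i$ admits no labeling satisfying Theorem~\ref{th} is a legitimate finite check.

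The genuine gap is the sufficiency direction, which you yourself identify as ``the real content'' and then do not carry out --- you describe a case analysis rather than performing it, so the proposal is a strategy outline, not a proof. Worse, the contrapositive framing you sketch is shakier than you acknowledge: if no admissible labeling of $K_5$ exists, Theorem~\ref{th} gives you one local violation \emph{per labeling}, and these $5!$ separate obstructions need not involve a common small set of vertices of $E$; extracting a single induced $T_i$ from their conjunction is exactly the global combinatorial step that is missing, and it does not follow from analyzing failure modes of individual labelings in isolation. The way this is actually organized, both in \cite{chen2022representing} and in this paper's proof of Theorem~\ref{m}, is the forward direction: after the normalization via Lemma~\ref{lemma}, partition all configurations by the profile of degrees $d_E(v)$ of clique vertices, exhibit an explicit admissible ordering certifying semi-transitivity for each maximal representable configuration, and show that every extension beyond these already contains one of the listed graphs. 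Until that enumeration is done (and done exhaustively --- you correctly note that overlooking a configuration is the main risk), the theorem is not established.
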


\begin{theorem}\cite{kitaev2021word} Let \( S_n = (E_3, K_{n-3}) \) be a split graph. Then \( S_n \) is word-representable if and only if it does not contain \( T_1, T_2 \), and \( T_3 \) (as shown in Figure~\ref{fig:graph-configurations}) as induced subgraphs.

\end{theorem}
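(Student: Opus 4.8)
The plan is to reduce word-representability to the existence of a clique labeling via the established machinery, and then to analyze how the three neighborhoods of the independent set can intersect. By Theorem~\ref{ch} it suffices to decide semi-transitivity, and by Theorem~\ref{th} this is equivalent to labeling the clique vertices $1,\dots,k$ (with $k=n-3$) so that each $N(v)$, $v\in E_3=\{v_1,v_2,v_3\}$, is an interval $[a,b]$ or a co-interval $[1,a]\cup[b,k]$, subject to the pairwise compatibility conditions of that theorem. Using Lemma~\ref{lemma} I first pass to the reduced form in which every $v_i$ has degree at least $2$, the three neighborhoods are pairwise distinct, and at most one clique vertex is adjacent to no $v_i$. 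Writing $N_i=N(v_i)$, the only data relevant to the labeling is, for each clique vertex, which of the $N_i$ contain it; this partitions $K$ into at most the $7$ nonempty \emph{regions} indexed by the nonempty subsets of $\{1,2,3\}$, plus at most one vertex in no region. Since duplicated vertices may be merged, each region may be treated as a single block, so a labeling is simply a linear order of these blocks.

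For the forward (\emph{only if}) direction I use that word-representability is hereditary, so it is enough to check that each of $T_1,T_2,T_3$ (Figure~\ref{fig:graph-configurations}) is itself non-word-representable. For each $T_i$ I verify, directly from Theorem~\ref{th}, that no labeling of its clique can turn all three neighborhoods into pairwise-compatible intervals/co-intervals; as each $T_i$ has only a few vertices, this is a short finite check. Any $S_n$ containing an induced $T_i$ then inherits the obstruction and fails to be word-representable.

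For the converse (\emph{if}) direction I assume $S_n$ contains none of $T_1,T_2,T_3$ and construct a valid labeling. The argument is organized by the \emph{Venn pattern} $\mathcal{P}\subseteq\{1,2,3,12,13,23,123\}$ recording which regions are nonempty; permuting $v_1,v_2,v_3$ gives an $S_3$-symmetry that collapses $\mathcal{P}$ to a short list of representatives. For each representative I either exhibit an explicit order of the region-blocks—typically placing the blocks around a circle so each $N_i$ becomes an arc, and then choosing a cut point that turns the arcs into intervals/co-intervals satisfying the second and third conditions of Theorem~\ref{th}—or else I show that $\mathcal{P}$ forces a forbidden subgraph, by selecting one clique vertex from each offending region together with $v_1,v_2,v_3$ to display an induced copy of $T_1$, $T_2$, or $T_3$. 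A useful preliminary reduction is that any two of the three neighborhoods can always be realized simultaneously as compatible intervals, so every genuine obstruction must involve all three $N_i$, and in particular the co-interval compatibility conditions.

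The main obstacle is the completeness and correctness of this case analysis, and specifically the interaction with the second and third conditions of Theorem~\ref{th}. These conditions are strictly stronger than mere simultaneous arc-representability on a circle: a pattern may admit three arcs on a circle yet admit no \emph{cut} into a line for which the induced interval/co-interval labels are pairwise compatible (for instance, two co-intervals are compatible only when their complementary middle segments overlap). Thus the delicate step is to separate the patterns that are arc-realizable but still infeasible as compatible labelings, and to match each truly infeasible pattern to the correct $T_i$; the $S_3$-symmetry and the two-neighborhood reduction are what keep this analysis finite and manageable.
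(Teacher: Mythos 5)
The statement you are proving is one the paper itself does not prove: it is quoted verbatim from \cite{kitaev2021word} as background, so there is no in-paper argument to compare against. That said, your strategy is exactly the methodology this paper deploys for its own main result (Theorem~\ref{m}): reduce to semi-transitivity via Theorem~\ref{ch}, translate to the interval/co-interval labeling conditions of Theorem~\ref{th}, use Lemma~\ref{lemma} to merge duplicate clique vertices and discard degree-$\le 1$ vertices of $E$, and then organize a finite case analysis by which subsets of $E$ occur as neighborhoods of clique vertices (your ``regions'' are precisely the paper's vertices $v_S$). Your preliminary observations are sound: each region collapses to a single block, any two of the three neighborhoods can be made compatible intervals, and the $S_3$-symmetry keeps the pattern list short. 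You are also right to flag that arc-realizability on a circle is strictly weaker than the pairwise compatibility conditions of Theorem~\ref{th}; that distinction is where the real content lies.

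The genuine gap is that the decisive step is announced but not executed. You never enumerate the Venn patterns, never exhibit the labelings for the feasible ones, and never match the infeasible ones to $T_1$, $T_2$, $T_3$ — so the proposal establishes neither that these three graphs suffice nor (except by an unperformed ``short finite check'') that they are obstructions at all. In particular, the heart of the theorem is the claim that \emph{every} pattern failing the compatibility conditions of Theorem~\ref{th} already contains one of exactly three specific configurations; as written, nothing rules out a fourth minimal obstruction, which is precisely the kind of surprise this line of work turns up (compare the extra graph $D_1$ that appears for $|E|=4$, or $T_9$ for $K_5$). To turn the outline into a proof you must carry out the case analysis in the style of the paper's proof of Theorem~\ref{m}: list the non-isomorphic region patterns, give an explicit ordering of $E$ and of the blocks of $K$ certifying the conditions of Theorem~\ref{th} in each word-representable case, and in each remaining case name the clique vertices that, together with $E_3$, induce a copy of $T_1$, $T_2$, or $T_3$.
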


\section{Characterization of Word-Representable Split Graphs with a Fixed-Size Independent Set}\label{s} 

\begin{theorem}\label{m}Let \( S_n = (E_4, K_{n-4}) \) be a split graph. Then \( S_n \) is word-representable if and only if it does not contain any of the graphs \( T_1, T_2, \ldots, T_8 \ , D_1 \) (as shown in Figure~\ref{fig:graph-configurations}) as induced subgraphs.
\end{theorem}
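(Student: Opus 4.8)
The plan is to prove both directions through the semi-transitivity characterization, since by Theorem~\ref{ch} a split graph is word-representable exactly when it is semi-transitive, and Theorem~\ref{th} recasts semi-transitivity of a split graph as the existence of a labeling $1,\dots,k$ of the clique $K$ under which every neighborhood $N(v)\subseteq K$ is an interval $[a,b]$ or a wraparound co-interval $[1,a]\cup[b,k]$, subject to the two compatibility conditions. First I would put $S_n$ into the reduced form justified by Lemma~\ref{lemma}: no two vertices share a neighborhood, at most one clique vertex has no neighbor in $E_4$, and every vertex of $E_4$ has degree at least two. Encoding the $E$--$K$ adjacencies as a $0/1$ matrix whose rows are the four vertices $v_1,v_2,v_3,v_4$ of $E_4$ and whose columns are the vertices of $K$, each column is a subset of $\{v_1,v_2,v_3,v_4\}$, and the no-shared-neighborhood reduction forces the columns to be pairwise distinct. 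Hence $|K|\le 16$ and the reduced problem is finite; Theorem~\ref{th} now asks precisely for a circular-consecutive ordering of these columns satisfying the compatibility conditions.

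For necessity I would use that word-representability is hereditary under induced subgraphs (restrict a representing word to the sub-alphabet of the subgraph), so it suffices to show that each of $T_1,\dots,T_8,D_1$ is itself non-word-representable and minimal. The graphs $T_1,\dots,T_8$ already occur as forbidden induced subgraphs in the $K_5$ characterization (Theorem~\ref{th5}) and its predecessors, so their non-representability and minimality are inherited from prior work; the genuinely new verification is for $D_1$. For $D_1$ I would apply Theorem~\ref{th} directly, deriving a contradiction from the interval/co-interval condition together with the two compatibility conditions for its four independent-set vertices, thereby showing that no admissible labeling of its clique exists, and then confirm minimality by checking that deleting any single vertex yields a semi-transitive (hence word-representable) graph.

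The substantial direction is sufficiency: assuming the reduced $S_n$ contains none of $T_1,\dots,T_8,D_1$, I would construct a labeling meeting Theorem~\ref{th}. I would organize the argument around the intersection pattern of the four neighborhoods $N(v_1),\dots,N(v_4)$, classifying each pair as \emph{nested}, \emph{disjoint}, or \emph{crossing} (incomparable with nonempty intersection). The guiding principle is that each forbidden $T_i$ and $D_1$ is a minimal obstruction, namely a particular unavoidable crossing/nesting configuration among the $E$-vertices together with the clique vertices witnessing it; so forbidding them restricts the admissible global patterns to a short enumerable list. For each surviving pattern I would exhibit an explicit circular ordering of $K$ that makes overlapping neighborhoods consecutive arcs, assigning a neighborhood the co-interval form $[1,a]\cup[b,k]$ only when it is forced to cross the cut between position $k$ and position $1$, and then verify the compatibility conditions by hand.

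The main obstacle lies in this sufficiency case analysis, and specifically in the two compatibility conditions of Theorem~\ref{th}: even when each neighborhood can individually be realized as an interval, the global ordering may still fail because two co-intervals, or an interval and a co-interval, are forced into conflict. The delicate part is to show that every such conflict is witnessed by one of the listed subgraphs, and in particular that $D_1$ captures exactly the new obstruction that appears once the clique is allowed to grow beyond the sizes handled in the $K_4$ and $K_5$ results. Handling the wraparound co-interval cases and establishing completeness of the pattern list, so that ruling out $T_1,\dots,T_8,D_1$ leaves precisely the configurations admitting a valid labeling, is where the bulk of the careful work will be concentrated; the finiteness bound $|K|\le 16$ provides a useful backstop for confirming that no case is missed.
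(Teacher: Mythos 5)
Your skeleton coincides with the paper's: reduce via Lemma~\ref{lemma}, translate word-representability into the labeling conditions of Theorem~\ref{th}, get necessity from heredity of word-representability plus the known status of $T_1,\dots,T_8$ and a direct verification that $D_1$ admits no labeling (with minimality checked by deleting each vertex), and prove sufficiency by a finite case analysis ending in explicit orderings of $E$ and $K$. The differences are in how the case analysis is organized. The paper enumerates by the clique side: it classifies cases according to how many clique vertices $v$ have $d_E(v)=|N(v)\cap E|$ equal to $2$, $3$, or $4$ and which subsets of $\{a,b,c,d\}$ these neighborhoods realize, builds up maximal representable configurations, and derives the forbidden graphs by adding one more clique vertex. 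It also dispatches the $d_E(v)=4$ case in one stroke via Theorem~\ref{co} (a universal vertex reduces the question to split comparability graphs, i.e.\ to the $B_1,B_2,B_3$ characterization of Figure~\ref{fig:B123}), a shortcut your plan does not exploit. Your observation that the reduced problem is finite ($|K|\le 16$ distinct column types) is correct and is implicitly what makes the paper's enumeration terminate.

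The one substantive weakness is your proposed organizing principle for sufficiency. Classifying the four neighborhoods $N(a),\dots,N(d)\subseteq K$ only by their \emph{pairwise} relations (nested, disjoint, crossing) is too coarse to drive the case analysis: for instance, three neighborhoods that pairwise intersect but have empty triple intersection give the $T_1$ obstruction, while three neighborhoods with the same pairwise pattern but a common clique vertex can be perfectly representable. So the enumeration must in any event be refined down to \emph{which} subsets $S\subseteq\{a,b,c,d\}$ occur as $E$-neighborhoods of clique vertices, which is exactly the paper's scheme; your pairwise taxonomy would have to be abandoned or heavily subdivided midway. Beyond that, the proposal defers the entire case analysis -- which is essentially the whole content of the proof, occupying several pages of explicit orderings and subgraph identifications in the paper -- so as written it is a correct plan with the right ingredients rather than a proof.
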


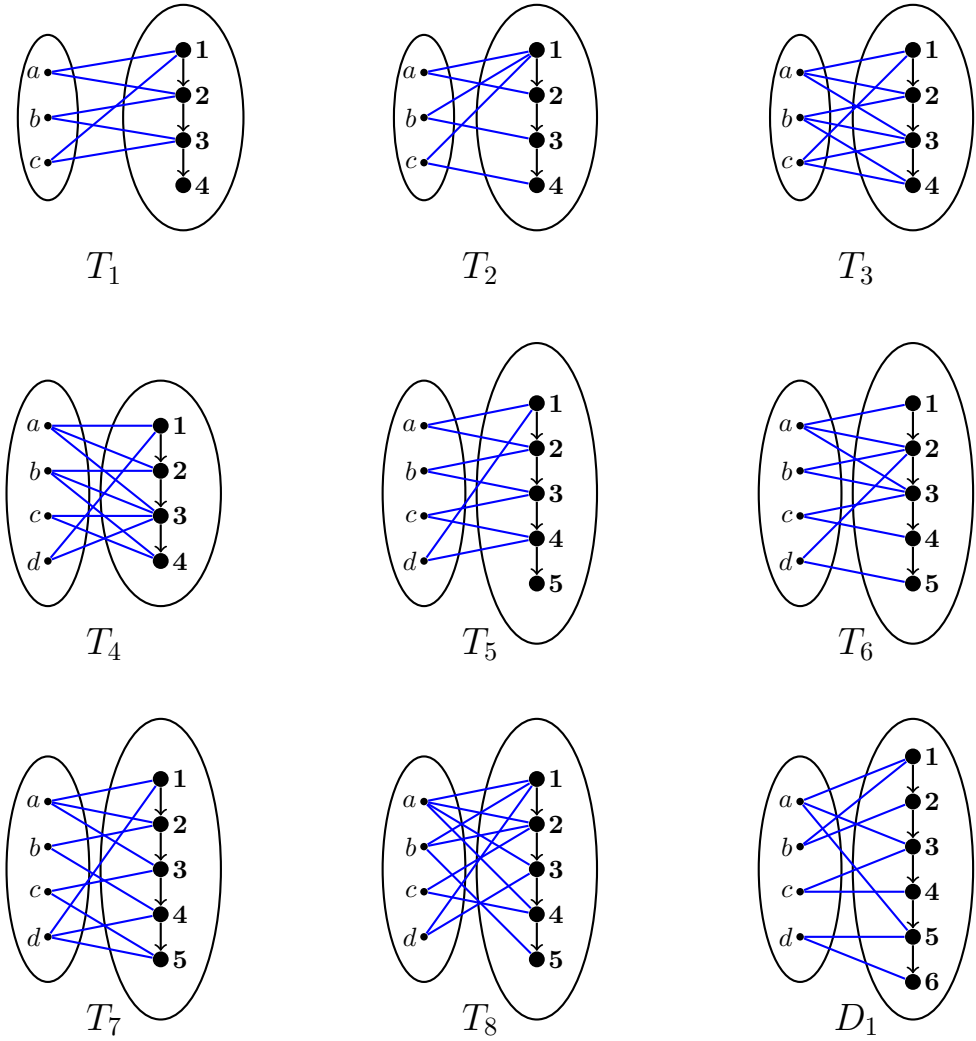
\begin{figure}
    \centering
    \begin{tikzpicture}[every node/.style={inner sep=1pt}, scale=1]
\tikzset{
  bigvtx/.style={circle, fill=black, minimum size=6pt, inner sep=0pt},
  cliqueedge/.style={black, thick, line width=0.5mm, -{Latex[length=1.5mm, width=1.5mm]}}
}

\def\xsep{5}
\def\ovalx{1.5}

\begin{scope}[shift={(0,0)}]
\draw[thick] (0,0) ellipse (.4cm and 1.1cm);
\node[circle, fill=black, label=left:$a$] (a1) at (0,0.6) {};
\node[circle, fill=black, label=left:$b$] (b1) at (0,0) {};
\node[circle, fill=black, label=left:$c$] (c1) at (0,-0.6) {};

\def\ovalx{1.8} 
\draw[thick] (\ovalx,0) ellipse (.8cm and 1.5cm);
\node[bigvtx, label=right:{\textbf{1}}] (x1) at (\ovalx,0.9) {};
\node[bigvtx, label=right:{\textbf{2}}] (y1) at (\ovalx,0.3) {};
\node[bigvtx, label=right:{\textbf{3}}] (z1) at (\ovalx,-0.3) {};
\node[bigvtx, label=right:{\textbf{4}}] (w1) at (\ovalx,-0.9) {};

\draw[black, thick, ->] (x1) -- (y1);
\draw[black, thick, ->] (y1) -- (z1);
\draw[black, thick, ->] (z1) -- (w1);

\draw[blue, thick] (a1) -- (x1);
\draw[blue, thick] (a1) -- (y1);
\draw[blue, thick] (b1) -- (y1);
\draw[blue, thick] (b1) -- (z1);
\draw[blue, thick] (c1) -- (z1);
\draw[blue, thick] (c1) -- (x1);

\node at (0.75,-2) {\Large $T_1$};
\end{scope}

\begin{scope}[shift={(\xsep,0)}]
\draw[thick] (0,0) ellipse (.4cm and 1.1cm);
\node[circle, fill=black, label=left:$a$] (a2) at (0,0.6) {};
\node[circle, fill=black, label=left:$b$] (b2) at (0,0) {};
\node[circle, fill=black, label=left:$c$] (c2) at (0,-0.6) {};

\draw[thick] (\ovalx,0) ellipse (.8cm and 1.5cm);
\node[bigvtx, label=right:{\textbf{1}}] (x2) at (\ovalx,0.9) {};
\node[bigvtx, label=right:{\textbf{2}}] (y2) at (\ovalx,0.3) {};
\node[bigvtx, label=right:{\textbf{3}}] (z2) at (\ovalx,-0.3) {};
\node[bigvtx, label=right:{\textbf{4}}] (w2) at (\ovalx,-0.9) {};

\draw[black, thick, ->] (x2) -- (y2);
\draw[black, thick, ->] (y2) -- (z2);
\draw[black, thick, ->] (z2) -- (w2);

\draw[blue, thick] (a2) -- (x2);
\draw[blue, thick] (a2) -- (y2);
\draw[blue, thick] (b2) -- (x2);
\draw[blue, thick] (b2) -- (z2);
\draw[blue, thick] (c2) -- (x2);
\draw[blue, thick] (c2) -- (w2);

\node at (0.75,-2) {\Large $T_2$};
\end{scope}

\begin{scope}[shift={(2*\xsep,0)}]
\draw[thick] (0,0) ellipse (.4cm and 1.1cm);
\node[circle, fill=black, label=left:$a$] (a3) at (0,0.6) {};
\node[circle, fill=black, label=left:$b$] (b3) at (0,0) {};
\node[circle, fill=black, label=left:$c$] (c3) at (0,-0.6) {};

\draw[thick] (\ovalx,0) ellipse (.8cm and 1.5cm);
\node[bigvtx, label=right:{\textbf{1}}] (x3) at (\ovalx,0.9) {};
\node[bigvtx, label=right:{\textbf{2}}] (y3) at (\ovalx,0.3) {};
\node[bigvtx, label=right:{\textbf{3}}] (z3) at (\ovalx,-0.3) {};
\node[bigvtx, label=right:{\textbf{4}}] (w3) at (\ovalx,-0.9) {};

\draw[black, thick, ->] (x3) -- (y3);
\draw[black, thick, ->] (y3) -- (z3);
\draw[black, thick, ->] (z3) -- (w3);

\draw[blue, thick] (a3) -- (x3);
\draw[blue, thick] (a3) -- (y3);
\draw[blue, thick] (a3) -- (z3);
\draw[blue, thick] (b3) -- (y3);
\draw[blue, thick] (b3) -- (z3);
\draw[blue, thick] (b3) -- (w3);
\draw[blue, thick] (c3) -- (z3);
\draw[blue, thick] (c3) -- (w3);
\draw[blue, thick] (c3) -- (x3);

\node at (0.75,-2) {\Large $T_3$};
\end{scope}


\begin{scope}[shift={(0,-5)}]
\draw[thick] (0,0) ellipse (.55cm and 1.5cm);
\node[circle, fill=black, label=left:$a$] (a4) at (0,0.9) {};
\node[circle, fill=black, label=left:$b$] (b4) at (0,0.3) {};
\node[circle, fill=black, label=left:$c$] (c4) at (0,-0.3) {};
\node[circle, fill=black, label=left:$d$] (d4) at (0,-0.9) {};

\draw[thick] (\ovalx,0) ellipse (.8cm and 1.5cm);
\node[bigvtx, label=right:{\textbf{1}}] (x4) at (\ovalx,0.9) {};
\node[bigvtx, label=right:{\textbf{2}}] (y4) at (\ovalx,0.3) {};
\node[bigvtx, label=right:{\textbf{3}}] (z4) at (\ovalx,-0.3) {};
\node[bigvtx, label=right:{\textbf{4}}] (w4) at (\ovalx,-0.9) {};

\draw[black, thick, ->] (x4) -- (y4);
\draw[black, thick, ->] (y4) -- (z4);
\draw[black, thick, ->] (z4) -- (w4);

\draw[blue, thick] (a4) -- (x4);
\draw[blue, thick] (a4) -- (y4);
\draw[blue, thick] (a4) -- (z4);
\draw[blue, thick] (b4) -- (y4);
\draw[blue, thick] (b4) -- (z4);
\draw[blue, thick] (b4) -- (w4);
\draw[blue, thick] (c4) -- (z4);
\draw[blue, thick] (c4) -- (w4);
\draw[blue, thick] (d4) -- (z4);
\draw[blue, thick] (d4) -- (x4);

\node at (0.75,-2) {\Large $T_4$};
\end{scope}

\begin{scope}[shift={(\xsep,-5)}]
\draw[thick] (0,0) ellipse (.55cm and 1.5cm);
\node[circle, fill=black, label=left:$a$] (a5) at (0,0.9) {};
\node[circle, fill=black, label=left:$b$] (b5) at (0,0.3) {};
\node[circle, fill=black, label=left:$c$] (c5) at (0,-0.3) {};
\node[circle, fill=black, label=left:$d$] (d5) at (0,-0.9) {};

\draw[thick] (\ovalx,0) ellipse (.8cm and 2cm);
\node[bigvtx, label=right:{\textbf{1}}] (v5) at (\ovalx,1.2) {};
\node[bigvtx, label=right:{\textbf{2}}] (x5) at (\ovalx,0.6) {};
\node[bigvtx, label=right:{\textbf{3}}] (y5) at (\ovalx,0) {};
\node[bigvtx, label=right:{\textbf{4}}] (z5) at (\ovalx,-0.6) {};
\node[bigvtx, label=right:{\textbf{5}}] (w5) at (\ovalx,-1.2) {};

\draw[black, thick, ->] (v5) -- (x5);
\draw[black, thick, ->] (x5) -- (y5);
\draw[black, thick, ->] (y5) -- (z5);
\draw[black, thick, ->] (z5) -- (w5);

\draw[blue, thick] (a5) -- (v5);
\draw[blue, thick] (a5) -- (x5);
\draw[blue, thick] (b5) -- (x5);
\draw[blue, thick] (b5) -- (y5);
\draw[blue, thick] (c5) -- (y5);
\draw[blue, thick] (c5) -- (z5);
\draw[blue, thick] (d5) -- (z5);
\draw[blue, thick] (d5) -- (v5);

\node at (0.75,-2) {\Large $T_5$};
\end{scope}

\begin{scope}[shift={(2*\xsep,-5)}]
\draw[thick] (0,0) ellipse (.55cm and 1.5cm);
\node[circle, fill=black, label=left:$a$] (a5) at (0,0.9) {};
\node[circle, fill=black, label=left:$b$] (b5) at (0,0.3) {};
\node[circle, fill=black, label=left:$c$] (c5) at (0,-0.3) {};
\node[circle, fill=black, label=left:$d$] (d5) at (0,-0.9) {};

\draw[thick] (\ovalx,0) ellipse (.8cm and 2cm);
\node[bigvtx, label=right:{\textbf{1}}] (v5) at (\ovalx,1.2) {};
\node[bigvtx, label=right:{\textbf{2}}] (x5) at (\ovalx,0.6) {};
\node[bigvtx, label=right:{\textbf{3}}] (y5) at (\ovalx,0) {};
\node[bigvtx, label=right:{\textbf{4}}] (z5) at (\ovalx,-0.6) {};
\node[bigvtx, label=right:{\textbf{5}}] (w5) at (\ovalx,-1.2) {};

\draw[black, thick, ->] (v5) -- (x5);
\draw[black, thick, ->] (x5) -- (y5);
\draw[black, thick, ->] (y5) -- (z5);
\draw[black, thick, ->] (z5) -- (w5);

\draw[blue, thick] (a5) -- (v5);
\draw[blue, thick] (a5) -- (x5);
\draw[blue, thick] (a5) -- (y5);
\draw[blue, thick] (b5) -- (x5);
\draw[blue, thick] (b5) -- (y5);
\draw[blue, thick] (c5) -- (y5);
\draw[blue, thick] (c5) -- (z5);
\draw[blue, thick] (d5) -- (w5);
\draw[blue, thick] (d5) -- (x5);

\node at (0.75,-2) {\Large $T_6$};
\end{scope}

\begin{scope}[shift={(0,-10)}]
\draw[thick] (0,0) ellipse (.55cm and 1.5cm);
\node[circle, fill=black, label=left:$a$] (a5) at (0,0.9) {};
\node[circle, fill=black, label=left:$b$] (b5) at (0,0.3) {};
\node[circle, fill=black, label=left:$c$] (c5) at (0,-0.3) {};
\node[circle, fill=black, label=left:$d$] (d5) at (0,-0.9) {};

\draw[thick] (\ovalx,0) ellipse (.8cm and 2cm);
\node[bigvtx, label=right:{\textbf{1}}] (v5) at (\ovalx,1.2) {};
\node[bigvtx, label=right:{\textbf{2}}] (x5) at (\ovalx,0.6) {};
\node[bigvtx, label=right:{\textbf{3}}] (y5) at (\ovalx,0) {};
\node[bigvtx, label=right:{\textbf{4}}] (z5) at (\ovalx,-0.6) {};
\node[bigvtx, label=right:{\textbf{5}}] (w5) at (\ovalx,-1.2) {};

\draw[black, thick, ->] (v5) -- (x5);
\draw[black, thick, ->] (x5) -- (y5);
\draw[black, thick, ->] (y5) -- (z5);
\draw[black, thick, ->] (z5) -- (w5);

\draw[blue, thick] (a5) -- (v5);
\draw[blue, thick] (a5) -- (x5);
\draw[blue, thick] (a5) -- (y5);
\draw[blue, thick] (b5) -- (x5);
\draw[blue, thick] (b5) -- (z5);
\draw[blue, thick] (c5) -- (y5);
\draw[blue, thick] (c5) -- (w5);
\draw[blue, thick] (d5) -- (z5);
\draw[blue, thick] (d5) -- (w5);
\draw[blue, thick] (d5) -- (v5);

\node at (0.75,-2) {\Large $T_7$};
\end{scope}

\begin{scope}[shift={(\xsep,-10)}]
\draw[thick] (0,0) ellipse (.55cm and 1.5cm);
\node[circle, fill=black, label=left:$a$] (a5) at (0,0.9) {};
\node[circle, fill=black, label=left:$b$] (b5) at (0,0.3) {};
\node[circle, fill=black, label=left:$c$] (c5) at (0,-0.3) {};
\node[circle, fill=black, label=left:$d$] (d5) at (0,-0.9) {};

\draw[thick] (\ovalx,0) ellipse (.8cm and 2cm);
\node[bigvtx, label=right:{\textbf{1}}] (v5) at (\ovalx,1.2) {};
\node[bigvtx, label=right:{\textbf{2}}] (x5) at (\ovalx,0.6) {};
\node[bigvtx, label=right:{\textbf{3}}] (y5) at (\ovalx,0) {};
\node[bigvtx, label=right:{\textbf{4}}] (z5) at (\ovalx,-0.6) {};
\node[bigvtx, label=right:{\textbf{5}}] (w5) at (\ovalx,-1.2) {};

\draw[black, thick, ->] (v5) -- (x5);
\draw[black, thick, ->] (x5) -- (y5);
\draw[black, thick, ->] (y5) -- (z5);
\draw[black, thick, ->] (z5) -- (w5);

\draw[blue, thick] (a5) -- (v5);
\draw[blue, thick] (a5) -- (x5);
\draw[blue, thick] (a5) -- (y5);
\draw[blue, thick] (a5) -- (z5);
\draw[blue, thick] (b5) -- (v5);
\draw[blue, thick] (b5) -- (x5);
\draw[blue, thick] (b5) -- (w5);
\draw[blue, thick] (c5) -- (x5);
\draw[blue, thick] (c5) -- (z5);
\draw[blue, thick] (d5) -- (v5);
\draw[blue, thick] (d5) -- (y5);

\node at (0.75,-2) {\Large $T_8$};
\end{scope}

\begin{scope}[shift={(2*\xsep,-10)}]
\draw[thick] (0,0) ellipse (.55cm and 1.5cm);
\node[circle, fill=black, label=left:$a$] (a5) at (0,0.9) {};
\node[circle, fill=black, label=left:$b$] (b5) at (0,0.3) {};
\node[circle, fill=black, label=left:$c$] (c5) at (0,-0.3) {};
\node[circle, fill=black, label=left:$d$] (d5) at (0,-0.9) {};

\draw[thick] (\ovalx,0) ellipse (.8cm and 2cm);
\node[bigvtx, label=right:{\textbf{1}}] (u5) at (\ovalx,1.5) {};
\node[bigvtx, label=right:{\textbf{2}}] (v5) at (\ovalx,0.9) {};
\node[bigvtx, label=right:{\textbf{3}}] (w5) at (\ovalx,0.3) {};
\node[bigvtx, label=right:{\textbf{4}}] (x5) at (\ovalx,-0.3) {};
\node[bigvtx, label=right:{\textbf{5}}] (y5) at (\ovalx,-0.9) {};
\node[bigvtx, label=right:{\textbf{6}}] (z5) at (\ovalx,-1.5) {};

\draw[black, thick, ->] (u5) -- (v5);
\draw[black, thick, ->] (v5) -- (w5);
\draw[black, thick, ->] (w5) -- (x5);
\draw[black, thick, ->] (x5) -- (y5);
\draw[black, thick, ->] (y5) -- (z5);

\draw[blue, thick] (a5) -- (u5);
\draw[blue, thick] (a5) -- (w5);
\draw[blue, thick] (a5) -- (y5);
\draw[blue, thick] (b5) -- (u5);
\draw[blue, thick] (b5) -- (v5);
\draw[blue, thick] (c5) -- (w5);
\draw[blue, thick] (c5) -- (x5);
\draw[blue, thick] (d5) -- (y5);
\draw[blue, thick] (d5) -- (z5);

\node at (0.75,-2) {\Large $D_1$};
\end{scope}

\end{tikzpicture}
  \caption{Illustration of the graphs from $T_1$ to $D_1$, depicting various configurations. Each right oval schematically represents a clique, similar to the one shown in Figure~\ref{km}.}
  \label{fig:graph-configurations}
  \end{figure}

To list down all minimal forbidden induced subgraphs for our problem, we primarily rely on Lemma \ref{lemma} and Theorem \ref{th}. Our method involves a systematic partitioning of all possible cases to ensure comprehensive coverage of the problem space. For each case, we identify the possible maximal word-representable as well as semi-transitive split graphs \( S_n \) relevant to the given context. Subsequently, by introducing an additional vertex, we derive the corresponding minimal non-word-representable graphs.

\begin{lemma}
The split graph \( D_1\) shown in Figure \ref{fig:graph-configurations}, is a minimal non-word-representable graph.

\end{lemma}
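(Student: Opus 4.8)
The plan is as follows. First I would read off from Figure~\ref{fig:graph-configurations} the neighbourhoods of the four independent vertices of $D_1$ in the drawn clique labelling $1<2<\cdots<6$: namely $N(a)=\{1,3,5\}$, $N(b)=\{1,2\}$, $N(c)=\{3,4\}$, and $N(d)=\{5,6\}$. The single tool for both halves of the statement is Theorem~\ref{th}. I would begin by recording the standard reformulation of its first condition: a labelling of the clique by $1,\dots,6$ makes every $N(v)$ of the form $[a,b]$ or $[1,a]\cup[b,k]$ if and only if the clique vertices can be placed on a $6$-cycle so that each $N(v)$ is a \emph{circular arc} (a set of cyclically consecutive vertices). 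Indeed, cutting the cycle at any point and labelling along it turns an arc crossing the cut into a co-interval and every other arc into an interval, and conversely the label order realises intervals and co-intervals as arcs. Thus $D_1$ violates Theorem~\ref{th}, and hence by Theorem~\ref{ch} is not word-representable, as soon as \emph{no} circular arrangement of $\{1,\dots,6\}$ realises all four neighbourhoods simultaneously as arcs.

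For non-representability I would show no such arrangement exists. Suppose one did. Since $|N(a)|=3$, the set $N(a)$ occupies three cyclically consecutive vertices; let $v$ be the middle one, so both cyclic neighbours of $v$ already lie in $N(a)$. Now $v\in\{1,3,5\}$, and each of $1,3,5$ belongs to exactly one of the size-$2$ sets $N(b),N(c),N(d)$, whose partner vertex lies in $\{2,4,6\}$, i.e.\ \emph{outside} $N(a)$. For that size-$2$ set to be an arc, $v$ must be cyclically adjacent to its partner; but both neighbours of $v$ are in $N(a)$, a contradiction. Hence the first condition of Theorem~\ref{th} fails for \emph{every} labelling, so $D_1$ is not word-representable.

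For minimality I would use that word-representability is hereditary---restricting a representing word to the letters of an induced subgraph represents that subgraph---so it suffices to prove $D_1-x$ word-representable for each single vertex $x$. I would exploit the automorphism $\sigma=(b\,c\,d)(1\,3\,5)(2\,4\,6)$ of $D_1$, which reduces the ten deletions to four representatives: deleting $a$, deleting $d$, deleting an odd clique vertex (say $1$), and deleting an even one (say $2$). In each case I would exhibit an explicit clique ordering under which \emph{all} surviving neighbourhoods are plain intervals $[a,b]$: the natural order $1,2,3,4,5,6$ after deleting $a$ (giving $[1,2],[3,4],[5,6]$); the order $2,1,5,3,4,6$ after deleting $d$; the order $2,4,3,5,6$ after deleting $1$; and the order $4,3,1,5,6$ after deleting $2$. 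Since no co-interval neighbourhood then occurs, the second and third conditions of Theorem~\ref{th} hold vacuously, so each $D_1-x$ is word-representable. (Alternatively, Lemma~\ref{lemma} may be invoked to discard the degree-$1$ independent vertex that appears whenever a clique vertex is deleted, before checking the remaining smaller split graph.)

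The main obstacle is making the non-representability argument genuinely universal over all $6!=720$ clique labellings, rather than only the one drawn; the circular-arc reformulation together with the ``middle vertex of the size-$3$ arc'' observation is exactly what collapses this into a single line. The remaining work---verifying the interval orderings in the four deletion cases and confirming that the split partition stays admissible (no surviving independent vertex dominates the reduced clique)---is routine once the $\sigma$-symmetry has cut the casework down.
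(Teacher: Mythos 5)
Your proposal is correct and follows essentially the same route as the paper: both halves rest on Theorem~\ref{th}, with non-representability obtained from the circular-consecutivity (circular ones) reading of its first condition and minimality obtained by exhibiting, up to the symmetry of $D_1$, explicit clique orderings for the four non-isomorphic one-vertex deletions (your interval orderings all check out). Your ``middle vertex of the size-$3$ arc'' argument is just a slightly more explicit version of the paper's observation that the three disjoint degree-$2$ neighbourhoods force adjacent pairs on the circle, leaving $N(a)$ non-consecutive, so there is nothing substantively different to flag.
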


\begin{proof}
To demonstrate the minimality of this graph, we show that deleting any vertex yields a word-representable graph. Considering only non-isomorphic cases, we provide vertex labeling in each case that satisfy the conditions of Theorem \ref{th}.
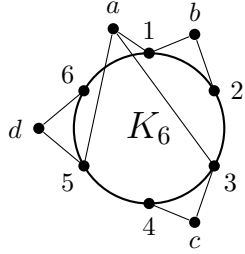
\begin{figure}
\centering
\begin{tikzpicture}[scale=1, every node/.style={circle, fill=black, inner sep=1.5pt}]

\coordinate (1) at (90:1);    
\coordinate (2) at (30:1);    
\coordinate (3) at (-30:1);   
\coordinate (4) at (-90:1);   
\coordinate (5) at (210:1);   
\coordinate (6) at (150:1);   
\draw[thick] (0,0) circle (1);

\path (1) -- (2) coordinate[pos=0.7] (b');
\path (3) -- (4) coordinate[pos=0.3] (c');
\path (5) -- (6) coordinate[pos=0.5] (d');
\path (6) -- (1) coordinate[pos=0.45] (a');

\node[draw=none, fill=none] at (0,0) {\Large $K_6$};

\node[label=above:$b$] (b) at ($(b') + (0,0.6)$) {};
\node[label=below:$c$] (c) at ($(c') + (0,-0.6)$) {};
\node[label=left:$d$] (d) at ($(d') + (-0.6,0)$) {};
\node[label=above:$a$] (a) at ($(a') + (0,0.6)$) {};

\node[label=above:$1$] at (1) {};
\node[label=right:$2$] at (2) {};
\node[label=below right:$3$] at (3) {};
\node[label=below:$4$] at (4) {};
\node[label=below left:$5$] at (5) {};
\node[label=above left:$6$] at (6) {};

\draw (b) -- (1);
\draw (b) -- (2);

\draw (c) -- (3);
\draw (c) -- (4);

\draw (d) -- (5);
\draw (d) -- (6);

\draw (a) -- (3);
\draw (a) -- (1);
\draw (a) -- (5);

\end{tikzpicture}
\caption{A schematic representation of \(D_1\), where the clique vertices lie on the circle and the independent vertices are positioned outside the circle.
}
\label{D_1}
\end{figure}
    \begin{itemize}

        \item \underline{\textit{Deletion of vertex \(a\):}}\quad \( E \): \(b>c>d\),\quad\( K \): \(1>2>3>4>5>6 \).

        \item \underline{\textit{Deletion of vertex \(b\):}}\quad \( E \): \(d>a>c\),\quad\( K \): \(6>5>1>3>4>2 \).

        \item \underline{\textit{Deletion of vertex \(1\):}}\quad \( E \): \(d>a>c>b\),\quad\( K \): \(6>5>3>4>2\).

        \item \underline {\textit{Deletion of vertex \(2\):}}\quad \( E \): \(c>a>b>d\),\quad\( K \): \(4>3>1>5>6 \).\\

    \end{itemize}
 
To prove that \( D_1 \) is not word-representable, we proceed by contradiction. Assume that \( D_1 \) is word-representable. According to Theorem~\ref{th}, the neighborhood of every vertex of degree 2 must consist of two consecutive vertices on the circle, as illustrated in Figure~\ref{D_1}. Under this assumption, the neighborhood of the vertex \( a \) of degree 3 is forced to be non-consecutive vertices on the circle. This contradicts Theorem~\ref{th}, and therefore \( D_1 \) is not word-representable.

  \end{proof}

For the minimality and non-word-representability of \( T_1, T_2, \ldots, T_8 \); we refer the reader to \cite{chen2022representing},\cite{kitaev2021word}.

Before proceeding to the proof, we introduce some notations to simplify the write-up.

\begin{itemize}

    \item Let \( a, b, c, d \) be the vertices of the independent set \( E \).

    \item For any subset \( S \subseteq \{a, b, c, d\} \), the notation \( v_S \) denotes a vertex in the clique whose neighborhood in \( E \) is exactly \( S \).

    \item For a vertex \( v \) in the clique, we define its degree with respect to the independent set \( E \) as
\[
d_E(v) = |N(v) \cap E|,
\]
where \( N(v) \) denotes the neighborhood of \( v \) in the graph.

     \item Let \( x, y, z, w \in \{a, b, c, d\} \), where \( \{a, b, c, d\} \) denotes the independent set \( E \).

     \item Let ordering $\mathcal{O}$ be \( x > y > z > w \), where \( x, y, z, w \in \{a, b, c, d\} = E \).

\end{itemize}

\begin{proof}[Proof of Theorem 15]

To establish that the only minimal forbidden subgraphs in all possible cases of a split graph with \( |E| = 4 \) are \( T_1, T_2, \ldots, T_{8} \), together with \( D_1 \), we analyze the following cases. For each case, we consider only non-isomorphic configurations among its subcases. To prove word-representability, we apply Theorem~\ref{th} using the specified vertex ordering.

\begin{enumerate}

    \item \textit{There does not exist any vertex \( v \in K \) such that \( d_E(v) = 3 \) or \( d_E(v) = 4 \).}

    \begin{enumerate}

        \item \textit{There exist exactly two vertices, say \( v_1 \) and \( v_2 \), in \( K \) such that
        \[
        d_E(v_1) = d_E(v_2) = 2.\]}

        \begin{enumerate}

            \item \( v_1 = v_{\{x,y\}} \), \( v_2 = v_{\{y,z\}} \)\\
            To prove semi-transitivity of all possible split graphs here, we consider \( E \) with ordering $\mathcal{O}$ and \( K \) in the following order:

            \[
            v_{\{x\}} > v_{\{x,y\}} > v_{\{y\}} > v_{\{y,z\}} > v_{\{z\}} > v_{\{w\}} > v_{\emptyset}
            \]

            \item \( v_1 = v_{\{x,y\}} \), \( v_2 = v_{\{z,w\}} \)\\
            To prove semi-transitivity of all possible split graphs under this case, consider \( E \) with ordering $\mathcal{O}$ and \( K \) with the following order:

            \[
            v_{\{x\}} > v_{\{x,y\}} > v_{\{y\}} > v_{\{z\}} > v_{\{z,w\}} > v_{\{w\}} > v_{\emptyset}
            \]

        \end{enumerate}

        \item \textit{There exist exactly three vertices, say \( v_1 \), \( v_2 \) and \( v_3 \) in \( K \) such that
        \[
        d_E(v_1) = d_E(v_2) = d_E(v_3) = 2.
        \]}

        \begin{enumerate}

            \item {\( v_1 = v_{\{x,y\}} \), \( v_2 = v_{\{y,z\}} \), \(v_3 = v_{\{x,z\} } \)}

            \begin{itemize}

                \item Addition of \(v_{\emptyset}\) or \(v_{w}\) yields a graph containing \(T_1\) as an induced subgraph.
\\For other cases,

                \item  \( E \): $\mathcal{O}$, \( K \): \(v_{\{x,z\}}> v_{\{x\}}>  v_{\{x,y\}}>v_{\{y\}}> v_{\{y,z\}}> v_{\{z\}}\).

            \end{itemize}

            \item  {\( v_1 = v_{\{x,y\}} \), \( v_2 = v_{\{x,z\}} \), \(v_3 = v_{\{x,w\} } \)}

            \begin{itemize}

                \item Addition of  \(v_{y},v_{z},v_{w}\) yields a graph containing \(D_1\) as an induced subgraph.\\
                 For other cases, 

                \item \( E \): \(y>x>z>w\), \( K \): \(v_{\{y\}}> v_{\{x,y\}}>  v_{\{x\}}>v_{\{x,z\}}> v_{\{x,w\}}> v_{\{w\}}>v_{\emptyset}\).

            \end{itemize}

            \item  {\( v_1 = v_{\{x,y\}} \), \( v_2 = v_{\{y,z\}} \), \(v_3 = v_{\{x,w\} } \)}

           To prove semi-transitivity of all possible split graphs here, we consider  
            \( E \): \(w>x>y>z\), \( K \):  \(v_{\{w\}}> v_{\{x,w\}}>  v_{\{x\}}>v_{\{x,y\}}> v_{\{y\}}>v_{\{y,z\}}> v_{\{z\}}> v_{\emptyset}\).

        \end{enumerate}

         \item {\textit{There exist exactly four vertices, say \( v_1 \), \( v_2 \), \(v_3\) and  \(v_4\) in \( K \) such that
\[
d_E(v_1) = d_E(v_2) = d_E(v_3) = d_E(v_4) = 2.
\]}}

        \begin{enumerate}

            \item {\( v_1 = v_{\{x,y\}} \), \( v_2 = v_{\{y,z\}} \), \( v_3 = v_{\{z,w\}} \), \(v_4 = v_{\{x,w\} } \)}\label{1.3.1}

            \begin{itemize}

                \item Addition of \(v_{\emptyset}\) yields a graph containing \(T_5\) as an induced subgraph.\\For other cases, 

                \item  \( E \): $\mathcal{O}$, \( K \): \(v_{\{x,w\}}> v_{\{x\}}>  v_{\{x,y\}}>v_{\{y\}}> v_{\{y,z\}}> v_{\{z\}}> v_{\{z,w\}}> v_{\{w\}}\).

            \end{itemize}

            \item {\( v_1 = v_{\{x,y\}} \), \( v_2 = v_{\{x,z\}} \), \( v_3 = v_{\{x,w\}} \), \(v_4 = v_{\{y,z\} } \)}

            \begin{itemize}

                \item Addition of \(v_{\emptyset}\) or  \(v_{w}\) yields a graph containing \(T_1\) as an induced subgraph.\\For other cases, 

                \item  \( E \): \(y>x>w>z\), \( K \): \(v_{\{y,z\}}> v_{\{y\}}>  v_{\{x,y\}}>v_{\{x\}}> v_{\{x,w\}}> v_{\{x,z\}}>v_{\{z\}}\).

            \end{itemize}

        \end{enumerate}

         \item {\textit{There exist atleast five vertices, say \( v_1 \), \( v_2 \), \(v_3\), \(v_4\) and \(v_5\)  in \( K \) such that
\[
d_E(v_1) = d_E(v_2) = d_E(v_3) = d_E(v_4) = d_E(v_5) = 2.\]}}\\
Here, consideration of only one scenario is enough; the rest of the things are isomorphic to it. Let \( v_1 = v_{\{x,y\}}\), \(v_2 = v_{\{x,z\}}\), \(v_3 = v_{\{x,w\}}\),  \(v_4 = v_{\{y,z\}}\), \(v_5 = v_{\{y,w\}} \). The graph itself in this case contains \(T_7\) as an induced subgraph. Hence, we don't need to consider subcases of it. Also, this case is discarded from the further extensions in the next cases.

    \end{enumerate}

    \item {\textit{There exists exactly one vertex \( v_1\in K \) such that \( d_E(v_1) = 3 \), and there is no vertex in \( K \) such that  \( d_E(v) = 4 \).}}\\

   Without loss of generality, let \( v_1 = v_{\{x,y,z\}} \). In this case, if we add the vertices \( v_{\{x\}}, v_{\{y\}}, v_{\{z\}} \) to the clique \( K \) with the existing graph structure, the resulting graph will contain \( T_2 \) as an induced subgraph. Therefore, we shall exclude this scenario from consideration in the subsequent subcases while extending the graph.

    \begin{enumerate}

        \item {\textit{{There exists exactly one vertex, say \( v_1 \) in \( K \) such that
\[ d_E(v_2) = 2.
\]
}}}

         \begin{enumerate}

            \item {\( v_2 = v_{\{x,y\}} \)}\\
            To cover all possible split graphs in this case, we give the vertex labelings of two non-isomorphic graphs.

           \begin{itemize}

    \item \( E\): $\mathcal{O}$,  \(K\):
    \(v_{\{x\}} > v_{\{x,y,z\}} > v_{\{x,y\}} > v_{\{y\}} > v_{\{w\}} > v_{\emptyset}. \)
    \item \( E\): $\mathcal{O}$, \(K:
    v_{\{x\}}  > v_{\{x,y\}} > v_{\{x,y,z\}} > v_{\{z\}} > v_{\{w\}} > v_{\emptyset}. \)
\end{itemize}

            \item {\( v_2 = v_{\{z,w\}} \)}\label{2.1.2}

            \begin{itemize}

    \item Addition of \( v_{\{x\}} \), \( v_{\{y\}} \) yields a graph containing \( T_2 \) as an induced subgraph.\\
      For the remaining cases,

    \item \( E\): $\mathcal{O}$, \( K \): \( v_{\{x\}}> v_{\{x,y,z\}}> v_{\{z\}}> v_{\{z,w\}}> v_{\{w\}}> v_{\emptyset} \).

\end{itemize}

        \end{enumerate}

        \item {\textit{There exist exactly two vertices, say \( v_2 \) and \( v_3 \), in \( K \) such that
\[
d_E(v_2) = d_E(v_3) = 2.
\]
}}

        \begin{enumerate} 

            \item {\( v_2 = v_{\{x,y\}} \), \( v_3 = v_{\{y,z\}} \)}\label{2.2.1}\\
             To cover all possible split graphs in this case, we give the vertex labelings of two non-isomorphic graphs.

           \begin{itemize}

    \item \( E\): $\mathcal{O}$, \( K:
    v_{\{x\}} > v_{\{x,y\}}> v_{\{x,y,z\}} > v_{\{y,z\}} > v_{\{z\}} > v_{\{w\}} > v_{\emptyset}. \)
    \item \( E\): $\mathcal{O}$, \( K:
    v_{\{x\}}  > v_{\{x,y\}} > v_{\{x,y,z\}}> v_{\{y,z\}} > v_{\{y\}} > v_{\{w\}} > v_{\emptyset}. \)
\end{itemize}
            
            \item {\( v_2 = v_{\{x,y\}} \), \( v_3 = v_{\{z,w\}} \)}\label{2.2.2}

            \begin{itemize}

    \item The case of adding \( v_{\{x\}} \), \( v_{\{y\}} \) is already discussed in \ref{2.1.2}.\\
    For the remaining cases,

    \item \( E \): $\mathcal{O}$, \( K \): \( v_{\{x\}}>  v_{\{x,y\}}>v_{\{x,y,z\}}> v_{\{z\}}> v_{\{z,w\}}> v_{\{w\}}> v_{\emptyset} \).

\end{itemize}

            \item {\( v_2 = v_{\{x,y\}} \), \( v_3 = v_{\{y,w\}} \)}

            \begin{itemize}

    \item Addition of \( v_{\{x\}} \), \( v_{\{z\}} \) yields a graph containing \(T_2\) as an induced subgraph.\\
    For the remaining cases, we provide vertex labelings of two non-isomorphic graphs.

    \item  \( E \): $\mathcal{O}$, \( K \): \( v_{\{x\}}>  v_{\{x,y\}}>v_{\{x,y,z\}}> v_{\{y\}}> v_{\{y,w\}}> v_{\{w\}}> v_{\emptyset} \). 

    \item  \( E \): \(w > y > x > z\), \( K \): \( v_{\{w\}}>  v_{\{w,y\}}>v_{\{y\}}> v_{\{y,x\}}>v_{\{x,y,z\}}> v_{\{z\}}> v_{\emptyset} \).

\end{itemize}

            \item {\( v_2 = v_{\{z,w\}} \), \( v_3 = v_{\{y,w\}} \).}\label{2.2.4}

            \begin{itemize}

    \item Addition of \( v_{\{x\}} \)  yields a graph containing \(T_2\) as an induced subgraph.

    \item Addition of \(v_{\emptyset} \) yields a graph containing \(T_1\) as an induced subgraph.\\
     For the remaining cases,

    \item \( E \): \(x> y > w > z\), \( K \): \(v_{\{x,y,z\}}> v_{\{y\}}>  v_{\{y,w\}}>v_{\{w\}}> v_{\{w,z\}}> v_{\{z\}}\).

\end{itemize}

        \end{enumerate}

         \item {\textit{There exist exactly three vertices, say \( v_2 \) , \( v_3 \) and \(v_4\) in \( K \) such that
\[
d_E(v_2) = d_E(v_3) = d_E(v_4) = 2.
\]}}

        \begin{enumerate}

            \item {\( v_2 = v_{\{x,y\}} \), \( v_3 = v_{\{y,z\}} \), \( v_4 = v_{\{x,z\}} \) }\label{2.3.1}\\
            This graph is itself \(T_3\).

            \item {\( v_2 = v_{\{x,y\}} \), \( v_3 = v_{\{y,w\}} \), \( v_4 = v_{\{x,w\}} \) }

             \begin{itemize}

    \item Addition of \( v_{\{z\}} \)  yields a graph containing \(T_2\) as an induced subgraph.

    \item Addition of \(v_{\emptyset} \) yields a graph containing \(T_1\) as an induced subgraph.\\
    For the remaining cases,

    \item \( E \): \(x> z > y> w\), \( K \): \(v_{\{x,w\}}> v_{\{x\}}>  v_{\{x,y,z\}}>v_{\{x,y\}}> v_{\{y\}}> v_{\{y,w\}}> v_{\{w\}}\).

\end{itemize}

            \item {\( v_2 = v_{\{x,y\}} \), \( v_3 = v_{\{x,z\}} \), \( v_4 = v_{\{x,w\}} \) }

            \begin{itemize}

    \item Addition of \( v_{\{z\}}, v_{\{y\}} \) yields a graph containing \(T_2\) as induced subgraph.\\
    For the remaining cases,

    \item  \( E \): \(y> z > x> w\), \( K \): \(v_{\{y\}}> v_{\{x,y\}}>  v_{\{x,y,z\}}>v_{\{x,z\}}> v_{\{x\}}> v_{\{x,w\}}> v_{\{w\}}> v_{\emptyset}\).

\end{itemize}

            \item {\( v_2 = v_{\{x,w\}} \), \( v_3 = v_{\{y,w\}} \), \( v_4 = v_{\{z,w\}} \) }\label{2.3.4}\\
            This graph contains \(T_2\).

            \item {\( v_2 = v_{\{x,y\}} \), \( v_3 = v_{\{y,z\}} \), \( v_4 = v_{\{x,w\}} \) }

            \begin{itemize}

    \item Adding \( v_{\{z\}}, v_{\{y\}} \)  yields a graph containing \(T_2\) as an induced subgraph.\\
    For the remaining cases, we provide vertex labelings of two non-isomorphic graphs.

    \item  \( E \): \(w> x > y> z\), \( K \): \(v_{\{w\}}> v_{\{x,w\}}> v_{\{x\}}>  v_{\{x,y\}}>v_{\{x,y,z\}}> v_{\{y,z\}}> v_{\{z\}}> v_{\emptyset}\). 

    \item  \( E \): \(w> x > y> z\), \( K \): \(v_{\{w\}}> v_{\{x,w\}}> v_{\{x\}}>  v_{\{x,y\}}>v_{\{x,y,z\}}> v_{\{y,z\}}> v_{\{y\}}> v_{\emptyset}\).

\end{itemize}

            \item {\( v_2 = v_{\{x,y\}} \), \( v_3 = v_{\{z,w\}} \), \( v_4 = v_{\{x,w\}} \) }\label{2.3.6}

            \begin{itemize}

    \item Addition of \( v_{\{y\}} \)  yields a graph containing \(T_2\) as an induced subgraph.

    \item Addition of \(v_{\emptyset} \) yields a graph containing \(T_1\) as an induced subgraph.\\
    For the remaining cases,

    \item \( E \): $\mathcal{O}$, \( K \): \(v_{\{x,w\}}> v_{\{x\}}>  v_{\{x,y\}}>v_{\{x,y,z\}}> v_{\{z\}}> v_{\{z,w\}}> v_{\{w\}}\).

\end{itemize}

        \end{enumerate}

         \item {\textit{There exist exactly four vertices, say \( v_2 \) , \( v_3 \) , \(v_4\) and  \(v_5\) in \( K \) such that
\[
d_E(v_2) = d_E(v_3) = d_E(v_4) = d_E(v_5) = 2.
\]}}

       \begin{enumerate}

            \item {\( v_2 = v_{\{x,y\}} \), \( v_3 = v_{\{y,z\}} \), \( v_4 = v_{\{z,w\}} \), \(v_5 = v_{\{x,w\} } \)}

            \begin{itemize}

            \item Addition of \( v_{\{y\}} \) yields a graph containing \(T_2\) as an induced subgraph.

    \item The case of adding \(v_{\emptyset} \) is already discussed in \ref{1.3.1}.\\
    For the remaining cases,
    \item \( E \): $\mathcal{O}$, \( K \): \(v_{\{x,w\}}> v_{\{x\}}>  v_{\{x,y\}}>  v_{\{x,y,z\}}>v_{\{y,z\}}> v_{\{z\}}> v_{\{z,w\}}> v_{\{w\}}\).

    \end{itemize}

            \item {\( v_2 = v_{\{x,y\}} \), \( v_3 = v_{\{x,z\}} \), \( v_4 = v_{\{x,w\}} \), \(v_5 = v_{\{y,z\} } \)}\\
            This case is already considered in \ref{2.3.1}.

            \item {\( v_2 = v_{\{x,y\}} \), \( v_3 = v_{\{x,z\}} \), \( v_4 = v_{\{x,w\}} \), \(v_5 = v_{\{z,w\} } \)}
            \begin{itemize}
    \item The case of adding \( v_{\{y\}} \) is already considered in \ref{2.3.6}.

    \item The case of adding \(v_{\emptyset} \) is already considered in \ref{2.3.6}.\\
    For the remaining cases,

    \item  \( E \): $\mathcal{O}$, \( K \): \(v_{\{x,w\}}> v_{\{x\}}>  v_{\{x,y\}}>v_{\{x,y,z\}}>v_{\{x,z\}}> v_{\{z\}}> v_{\{z,w\}}> v_{\{w\}}\).

            \end{itemize}            

       \item {\( v_2 = v_{\{x,w\}} \), \( v_3 = v_{\{y,w\}} \), \( v_4 = v_{\{z,w\}} \), \(v_5 = v_{\{x,y\} } \)}\\
            This case is already considered in \ref{2.3.4}.

        \end{enumerate}

    \end{enumerate}

    \item {\textit{There exist exactly two vertices \( v_1 , v_2\in K \) such that \( d_E(v_1)=d_E(v_2) = 3 \), and there is no vertex in \( K \) such that  \( d_E(v) = 4 \).}}\\

Without loss of generality, let \( v_1 = v_{\{x,y,z\}} \) and \( v_2 = v_{\{y,z,w\}} \). In this case, if we add the vertices \( v_{\{x\}}, v_{\{y\}}, v_{\{z\}} \) or  \( v_{\{y\}}, v_{\{z\}}, v_{\{w\}} \)  or  \( v_{\{x\}}, v_{\{y\}}, v_{\{w\}} \) or  \( v_{\{x\}}, v_{\{z\}}, v_{\{w\}} \) to the clique \( K \) with the existing graph structure, the resulting graph will contain \( T_2 \) as an induced subgraph and for  \( v_{\{x\}}, v_{\{y\}}, v_{\{w\}} \) or  \( v_{\{x\}}, v_{\{z\}}, v_{\{w\}} \) it will contain \( T_6 \). Therefore, we shall exclude these scenarios from consideration in the subsequent subcases while extending the graph.

    \begin{enumerate}

        \item {\textit{{There exists exactly one vertex, say \( v_3 \) in \( K \) such that
 $d_E(v_3) = 2.$}}}
    \begin{enumerate}

            \item {\( v_3 = v_{\{x,y\}}\) \text{or} \(v_{\{y,z\}} \)}\\
        Addition of \( v_{\emptyset} \) does not lead to the formation of an induced subgraph isomorphic to \( T_1 \) or \( T_5 \). Thus, \( v_{\emptyset} \) does not play a role in generating any minimal forbidden subgraphs. Consequently, the maximum possible size of \( K \) in this case is 5, which has already been accounted for in the Theorem \ref{th5}. So, we don't get any new minimal forbidden induced subgraph except the graphs \( T_1, T_2, \ldots, T_8 \) listed above.

            \item {\( v_3 = v_{\{x,w\}} \)}

            \begin{itemize}

    \item Addition of \(v_{\emptyset} \) yields a graph containing \(T_1\) as an induced subgraph.\\
    For the remaining cases,

    \item The maximum possible size of \( K \) is 5, which has already been characterized by Theorem \ref{th5}. So, we don't get any new minimal forbidden induced subgraph except the graphs \( T_1, T_2, \ldots, T_8 \) listed above.

\end{itemize}

        \end{enumerate}

         \item {\textit{There exist exactly two vertices, say \( v_3 \) and \( v_4 \), in \( K \) such that
\[
d_E(v_3) = d_E(v_4) = 2.
\]
}}

\begin{enumerate}

            \item {\( v_3 = v_{\{x,y\}} \), \( v_4 = v_{\{y,z\}} \)}

    \begin{itemize}

            \item Addition of \( v_{\{z\}}, v_{\{w\}} \) yields a graph containing \(T_2\) as an induced subgraph.\\
             For the remaining cases, we provide vertex labelings for five non-isomorphic graphs.

             \item \( E \): $\mathcal{O}$, \( K \): \(v_{\{x\}}>v_{\{x,y\}}> v_{\{x,y,z\}}>v_{\{y,z\}}>  v_{\{y,z,w\}}> v_{\{y\}}> v_{\emptyset}\). 

             \item \( E \): $\mathcal{O}$, \( K \): \(v_{\{y\}}>v_{\{x,y\}}> v_{\{x,y,z\}}>v_{\{y,z\}}>  v_{\{y,z,w\}}> v_{\{z\}}> v_{\emptyset}\).

             \item \( E \): $\mathcal{O}$, \( K \): \(v_{\{x\}}>v_{\{x,y\}}> v_{\{x,y,z\}}>v_{\{y,z\}}>  v_{\{y,z,w\}}> v_{\{z\}}> v_{\emptyset}\).

             \item \( E \): $\mathcal{O}$, \( K \): \(v_{\{x\}}>v_{\{x,y\}}> v_{\{x,y,z\}}>v_{\{y,z\}}>  v_{\{y,z,w\}}> v_{\{w\}}> v_{\emptyset}\).

             \item \( E \): $\mathcal{O}$, \( K \): \(v_{\{y\}}>v_{\{x,y\}}> v_{\{x,y,z\}}>v_{\{y,z\}}>  v_{\{y,z,w\}}> v_{\{w\}}> v_{\emptyset}\). 

    \end{itemize}

            \item {\( v_3 = v_{\{x,y\}} \), \( v_4 = v_{\{z,w\}} \)}

                 \begin{itemize}

            \item The case of adding \( v_{\{x\}}, v_{\{y\}} \) is already considered in \ref{2.1.2}.\\
            
             For the remaining cases, we provide vertex labelings for four non-isomorphic graphs.

             \item \( E \): $\mathcal{O}$, \( K \): \(v_{\{x\}}>v_{\{x,y\}}> v_{\{x,y,z\}}>  v_{\{y,z,w\}}>v_{\{z,w\}}> v_{\{z\}}> v_{\emptyset}\). 

             \item \( E \): $\mathcal{O}$, \( K \): \(v_{\{y\}}>v_{\{x,y\}}> v_{\{x,y,z\}}>  v_{\{y,z,w\}}>v_{\{z,w\}}> v_{\{w\}}> v_{\emptyset}\). 

             \item \( E \): $\mathcal{O}$, \( K \): \(v_{\{x\}}>v_{\{x,y\}}> v_{\{x,y,z\}}>  v_{\{y,z,w\}}>v_{\{z,w\}}> v_{\{w\}}> v_{\emptyset}\).

             \item \( E \): $\mathcal{O}$, \( K \): \(v_{\{y\}}>v_{\{x,y\}}> v_{\{x,y,z\}}>  v_{\{y,z,w\}}>v_{\{z,w\}}> v_{\{z\}}> v_{\emptyset}\).

    \end{itemize}

            \item {\( v_3 = v_{\{x,y\}} \), \( v_4 = v_{\{y,w\}} \)}\label{3.2.3}

    \begin{itemize}

                \item Addition of \( v_{\{z\}}\) yields a graph containing \(T_8\) as an induced subgraph.\\
             For the remaining cases, we provide vertex labelings for three non-isomorphic graphs.

             \item \( E \): $\mathcal{O}$, \( K \): \(v_{\{x\}}>v_{\{x,y\}}> v_{\{x,y,z\}}>  v_{\{y,z,w\}}>v_{\{y,w\}}> v_{\{y\}}> v_{\emptyset}\). 

             \item \( E \): $\mathcal{O}$, \( K \): \(v_{\{y\}}>v_{\{x,y\}}> v_{\{x,y,z\}}>  v_{\{y,z,w\}}>v_{\{y,w\}}> v_{\{w\}}> v_{\emptyset}\). 

             \item \( E \): $\mathcal{O}$, \( K \): \(v_{\{x\}}>v_{\{x,y\}}> v_{\{x,y,z\}}>  v_{\{y,z,w\}}>v_{\{y,w\}}> v_{\{w\}}> v_{\emptyset}\). 

    \end{itemize}

            \item {\( v_3 = v_{\{x,y\}} \), \( v_4 = v_{\{x,z\}} \)}\label{3.2.4}\\
            This graph contains \(T_3\).

            \item {\( v_3 = v_{\{y,z\}} \), \( v_4 = v_{\{x,w\}} \)}\label{3.2.5}

    \begin{itemize}

                \item Addition of \( v_{\{y\}}\) or \( v_{\emptyset}\) yields a graph containing \(T_1\) as an induced subgraph.\\
             For the remaining cases, 
             \item \( E \): $\mathcal{O}$, \( K \): \(v_{\{x,w\}}>v_{\{x\}}>v_{\{x,y,z\}}> v_{\{y,z\}}>  v_{\{y,z,w\}}> v_{\{w\}}\).

     \end{itemize}

            \item {\( v_3 = v_{\{x,w\}} \), \( v_4 = v_{\{x,y\}} \)}\label{3.2.6}

    \begin{itemize}

                \item Addition of \( v_{\{y\}}\) or \( v_{\{z\}}\) or \( v_{\emptyset}\) yields a graph containing \(T_1\) as an induced subgraph.\\
             For the remaining cases, 

             \item \( E \): $\mathcal{O}$, \( K \): \(v_{\{x,w\}}>v_{\{x\}}>v_{\{x,y\}}> v_{\{x,y,z\}}>  v_{\{y,z,w\}}> v_{\{w\}}\). 

     \end{itemize}

 \end{enumerate}

         \item {\textit{There exist exactly three vertices, say \( v_3 \) , \( v_4 \) and \(v_5\) in \( K \) such that
\[
d_E(v_3) = d_E(v_4) = d_E(v_5) = 2.
\]}}

        \begin{enumerate}

             \item {\( v_3 = v_{\{x,y\}} \), \( v_4 = v_{\{y,z\}}\), \( v_5 = v_{\{x,z\}} \)}\\
             This case is considered in \ref{2.3.1}.

             \item {\( v_3 = v_{\{x,y\}} \), \( v_4 = v_{\{y,w\}}\), \( v_5 = v_{\{x,w\}} \)}

             \begin{itemize}

                \item The case of adding \( v_{\{y\}}\) or \(v_{\{z\}}\) or \(v_{\emptyset}\) is considered in \ref{3.2.6}.\\
             For the remaining cases, 
             \item \( E \): $\mathcal{O}$, \( K \): \(v_{\{x,w\}}>v_{\{x\}}>v_{\{x,y\}}> v_{\{x,y,z\}}>  v_{\{y,z,w\}}>v_{\{y,w\}}> v_{\{w\}}\).

     \end{itemize}

            \item {\( v_3 = v_{\{x,y\}} \), \( v_4 = v_{\{x,z\}}\), \( v_5 = v_{\{x,w\}} \)}\\
            This graph contains \(T_3\).

            \item {\( v_3 = v_{\{x,y\}} \), \( v_4 = v_{\{y,z\}}\), \( v_5 = v_{\{y,w\}} \)}

             \begin{itemize}

                \item The case of adding \( v_{\{z\}}\) is already considered in \ref{3.2.3}.\\
             For the remaining cases, we provide vertex labelings for three non-isomorphic graphs.

             \item \( E \): $\mathcal{O}$, \( K \): \(v_{\{x\}}>v_{\{x,y\}}> v_{\{x,y,z\}}>v_{\{y,z\}}>  v_{\{y,z,w\}}>v_{\{y,w\}}> v_{\{y\}}> v_{\emptyset}\). 

            \item \( E \): $\mathcal{O}$, \( K \): \(v_{\{y\}}>v_{\{x,y\}}> v_{\{x,y,z\}}>v_{\{y,z\}}>  v_{\{y,z,w\}}>v_{\{y,w\}}> v_{\{w\}}> v_{\emptyset}\). 

            \item \( E \): $\mathcal{O}$, \( K \): \(v_{\{x\}}>v_{\{x,y\}}> v_{\{x,y,z\}}>v_{\{y,z\}}>  v_{\{y,z,w\}}>v_{\{y,w\}}> v_{\{w\}}> v_{\emptyset}\).

    \end{itemize}

            \item {\( v_3 = v_{\{x,y\}} \), \( v_4 = v_{\{y,z\}}\), \( v_5 = v_{\{x,w\}} \)}

            \begin{itemize}

                \item The case of adding \( v_{\{y\}}\) or \(v_{\{z\}}\) or \(v_{\emptyset}\) is already considered in \ref{3.2.6}.\\
             For the remaining cases, 

             \item \( E \): $\mathcal{O}$, \( K \): \(v_{\{x,w\}}>v_{\{x\}}>v_{\{x,y\}}> v_{\{x,y,z\}}>v_{\{y,z\}}>  v_{\{y,z,w\}}> v_{\{w\}}\).

     \end{itemize}

            \item {\( v_3 = v_{\{x,y\}} \), \( v_4 = v_{\{y,z\}}\), \( v_5 = v_{\{z,w\}} \)}

             \begin{itemize}

            \item The case of adding \( v_{\{x\}},v_{\{y\}} \) is already considered in \ref{2.1.2}.\\
             For the remaining cases, we provide vertex labelings for four non-isomorphic graphs. 

             \item \( E \): $\mathcal{O}$, \( K \): \(v_{\{x\}}>v_{\{x,y\}}> v_{\{x,y,z\}}>v_{\{y,z\}}>  v_{\{y,z,w\}}>v_{\{z,w\}}> v_{\{z\}}>v_{\emptyset}\).

             \item \( E \): $\mathcal{O}$, \( K \): \(v_{\{y\}}>v_{\{x,y\}}> v_{\{x,y,z\}}>v_{\{y,z\}}>  v_{\{y,z,w\}}>v_{\{z,w\}}> v_{\{w\}}>v_{\emptyset}\). 

             \item \( E \): $\mathcal{O}$, \( K \): \(v_{\{x\}}>v_{\{x,y\}}> v_{\{x,y,z\}}>v_{\{y,z\}}>  v_{\{y,z,w\}}>v_{\{z,w\}}> v_{\{w\}}>v_{\emptyset}\). 

             \item \( E \): $\mathcal{O}$, \( K \): \(v_{\{y\}}>v_{\{x,y\}}> v_{\{x,y,z\}}>v_{\{y,z\}}>  v_{\{y,z,w\}}>v_{\{z,w\}}> v_{\{z\}}>v_{\emptyset}\). 

    \end{itemize}

            \item {\( v_3 = v_{\{x,y\}} \), \( v_4 = v_{\{z,w\}}\), \( v_5 = v_{\{x,w\}} \)}

            \begin{itemize}

                \item The case of adding \( v_{\{y\}}\) or \(v_{\{z\}}\) or \(v_{\emptyset}\) is already considered in \ref{3.2.6}.\\
             For the remaining cases, 

             \item \( E \): $\mathcal{O}$, \( K \): \(v_{\{x,w\}}>v_{\{x\}}>v_{\{x,y\}}> v_{\{x,y,z\}}>  v_{\{y,z,w\}}>v_{\{z,w\}}> v_{\{w\}}\).

     \end{itemize}

            \item {\( v_3 = v_{\{x,y\}} \), \( v_4 = v_{\{x,z\}}\), \( v_5 = v_{\{y,w\}} \)}\\
            This case is considered in \ref{3.2.4}.

        \end{enumerate}

        \item {\textit{There exist exactly four vertices, say \( v_2 \), \( v_3 \), \(v_4\) and \(v_5\) in \( K \) such that
\[
d_E(v_2) = d_E(v_3) = d_E(v_4) = d_E(v_5) = 2.
\]}}
         \begin{enumerate}

             \item {\( v_2 = v_{\{x,y\}} \), \( v_3 = v_{\{y,z\}}\), \( v_4 = v_{\{z,w\}}\), \( v_5 = v_{\{x,w\}} \)}

             \begin{itemize}

                \item The case of adding \( v_{\{y\}}\) or \(v_{\{z\}}\) or \(v_{\emptyset}\) is already considered in \ref{3.2.6}.\\
             For the remaining cases, 

             \item \( E \): $\mathcal{O}$, \( K \): \(v_{\{x,w\}}>v_{\{x\}}>v_{\{x,y\}}> v_{\{x,y,z\}}>v_{\{y,z\}}>  v_{\{y,z,w\}}>v_{\{z,w\}}> v_{\{w\}}\).

     \end{itemize}

             \item {\( v_2 = v_{\{x,y\}} \), \( v_3 = v_{\{x,z\}}\), \( v_4 = v_{\{x,w\}}\), \( v_5 = v_{\{y,z\}} \)}\\
             This graph contains \(T_3\).

             \item {\( v_2 = v_{\{x,y\}} \), \( v_3 = v_{\{x,z\}}\), \( v_5 = v_{\{x,w\}}\), \( v_5 = v_{\{y,w\}} \)}\\
             This graph contains \(T_3\).

             \item {\( v_2 = v_{\{x,y\}} \), \( v_3 = v_{\{y,z\}}\), \( v_4 = v_{\{y,w\}}\), \( v_5 = v_{\{x,z\}} \)}\\
             This graph contains \(T_3\).

             \item {\( v_2 = v_{\{x,y\}} \), \( v_3 = v_{\{y,z\}}\), \( v_4 = v_{\{y,w\}}\), \( v_5 = v_{\{x,w\}} \)}

             \begin{itemize}

                \item The case of adding \( v_{\{y\}}\) or \(v_{\{z\}}\) or \(v_{\emptyset}\) is already considered in \ref{3.2.6}.\\
             For the remaining cases, 

             \item \( E \): $\mathcal{O}$, \( K \): \(v_{\{x,w\}}>v_{\{x\}}>v_{\{x,y\}}> v_{\{x,y,z\}}>v_{\{y,z\}}>  v_{\{y,z,w\}}>  v_{\{y,w\}}> v_{\{w\}}\).

     \end{itemize}

        \end{enumerate}

    \end{enumerate}

     \item {\textit{There exist atleast three vertices, say \( v_3 \), \( v_4\), \(v_5\) in \( K \) such that
\[
d_E(v_3) = d_E(v_4)= d_E(v_5)= 3.
\]}}  

 In this case, the graph contains \(T_3\) as an induced subgraph. So, this case is discarded from the further extensions.

    \item {\textit{There exists a vertex \( v_1 \in K \) such that \( d_E(v_1) = 4 \).}}\\

    The vertex \( v_1 \) in this case is a \emph{universal vertex}, i.e., it is adjacent to all other vertices in the graph. By Theorem \ref{co}, a graph \( G \) in this case is word-representable if and only if \( G \setminus v_1 \) is a comparability graph. \cite{golumbic2004algorithmic} provides a characterization of split comparability graphs in terms of forbidden induced subgraphs (as shown in Figure \ref{fig:B123}). 

After adding a universal vertex to \( B_1 \), \( B_2 \), and \( B_3 \), the resulting graphs contain \( T_2 \), \( T_3 \), and \( T_4 \), respectively, as induced subgraphs. Therefore, in this case, \( T_2 \), \( T_3 \), and \( T_4 \) are the only minimal forbidden induced subgraphs.

    \end{enumerate}
\end{proof}
\section{Concluding Remarks}

 In this work, we have provided a characterization for a specific subclass of split graph with \(|E| = 4\). However, the underlying ideas, along with the lists of minimal forbidden induced subgraphs presented here and in~\cite{bonomo2022forbidden}, \cite{chen2022representing}, and \cite{kitaev2021word}, may be helpful for developing characterizations for other fixed sizes of independent sets. Notably, the last two cases remain unchanged regardless of the size of the independent set, so it is sufficient to examine only the first three cases.
 
 To address the question of whether there is a relation between the number of minimal forbidden induced subgraphs for a fixed \(|E|\) and \(|K| = |E| + 1\), we observe the following: for \(|E|=4\) and \(|K|=5\), the numbers are equal, both being 9. However, for \(|E|=3\) and \(|K|=4\), the numbers are unequal, being 3 and 4, respectively. This naturally leads to the further question of whether, in general, the absolute difference between the number of minimal forbidden induced subgraphs corresponding to \(|E|\) and \(|K|=|E|+1\) is at most 1.

	\bibliographystyle{plain}
	\bibliography{ref.bib}
\end{document}